\numberwithin{equation}{section}
\theoremstyle{plain}
\newtheorem{thm}{Theorem}[section]
\newtheorem{prop}{Proposition}[section]
\newtheorem{lem}{Lemma}[section]
\theoremstyle{definition}
\newtheorem{dfn}{Definition}[section]
\theoremstyle{remark}
\newtheorem{rem}{Remark}[section]
\DeclareMathOperator{\td}{d\mspace{-2mu}}
\newcommand{\cmdeg}[1]{\sideset{}{_\mathrm{cm}^{#1}}\deg}
\begin{document}

\title[Integral representation of Bernoulli numbers]
{An integral representation and properties of Bernoulli numbers of the second kind}

\author[F. Qi]{Feng Qi}
\address{School of Mathematics and Informatics, Henan Polytechnic University, Jiaozuo City, Henan Province, 454010, China}
\email{\href{mailto: F. Qi <qifeng618@gmail.com>}{qifeng618@gmail.com}, \href{mailto: F. Qi <qifeng618@hotmail.com>}{qifeng618@hotmail.com}, \href{mailto: F. Qi <qifeng618@qq.com>}{qifeng618@qq.com}}
\urladdr{\url{http://qifeng618.wordpress.com}}

\subjclass[2010]{Primary 11B68; Secondary 11R33, 11S23, 26A48, 30E20, 33B99}

\keywords{integral representation; property; bernoulli numbers of the second kind; completely monotonic sequence; minimal; generating function; Bernstein function}

\begin{abstract}
In the paper, the author establishes an integral representation and properties of Bernoulli numbers of the second kind and reveals that the generating function of Bernoulli numbers of the second kind is a Bernstein function on $(0,\infty)$.
\end{abstract}

\thanks{This paper was typeset using \AmS-\LaTeX}

\maketitle

\section{Introduction}

The Bernoulli numbers of the second kind $b_0,b_1,b_2,\dotsc,b_n,\dotsc$ are defined by
\begin{equation}\label{bernoulli-second-dfn}
\frac{x}{\ln(1+x)}=\sum_{n=0}^\infty b_nx^n.
\end{equation}
They are also known as Cauchy numbers, Gregary coefficients, or logarithmic numbers. The first few Bernoulli numbers are
\begin{equation}
\begin{aligned}\label{Berno-5-values}
b_0&=1, & b_1&=\frac12, & b_2&=-\frac1{12}, & b_3&=\frac1{24}, & b_4&=-\frac{19}{720}, & b_5&=\frac3{160}.
\end{aligned}
\end{equation}
Can one establish an explicit formula for computing $b_n$ for $n\in\mathbb{N}$?
\par
In~\cite{2rd-Bernoulli-2012.tex}, by establishing an explicit formula for the $n$-th derivative of $\frac1{\ln x}$, an explicit formula for calculating $b_n$ was obtained as follows.

\begin{thm}[\cite{2rd-Bernoulli-2012.tex}]\label{Bernulli-2rd=thm}
For $n\ge2$, Bernoulli numbers of the second kind $b_n$ can be computed by
\begin{equation}\label{Bernulli-2rd=formula}
b_n=(-1)^n\frac1{n!}\Biggl(\frac1{n+1}+\sum_{k=2}^{n} \frac{a_{n,k}-na_{n-1,k}}{k!}\Biggr),
\end{equation}
where $a_{n,k}$ are defined by
\begin{equation}\label{a=n=0}
a_{n,2}=(n-1)!
\end{equation}
and
\begin{equation}\label{a=n=i=eq}
a_{n,i}=(i-1)!(n-1)!\sum_{\ell_1=1}^{n-1} \frac1{\ell_1}\sum_{\ell_2=1}^{\ell_1-1}\frac1{\ell_2}\dotsm \sum_{\ell_{i-3}=1}^{\ell_{i-4}-1}\frac1{\ell_{i-3}} \sum_{\ell_{i-2}=1}^{\ell_{i-3}-1}\frac1{\ell_{i-2}}
\end{equation}
for $n+1\ge i\ge3$.
\end{thm}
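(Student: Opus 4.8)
The plan is to read the defining relation~\eqref{bernoulli-second-dfn} as the Maclaurin expansion of $f(x)=\frac{x}{\ln(1+x)}$, so that $b_n=\frac1{n!}f^{(n)}(0)=\frac1{n!}\lim_{x\to0}f^{(n)}(x)$. First I would substitute $u=1+x$ and factor $f(x)=(u-1)g(u)$ with $g(u)=\frac1{\ln u}$. Because $\frac{d}{dx}=\frac{d}{du}$ and the factor $u-1$ has vanishing derivatives of order at least two, Leibniz's rule collapses to
\[
f^{(n)}(x)=(u-1)g^{(n)}(u)+n\,g^{(n-1)}(u),
\]
whence $n!\,b_n=\lim_{u\to1}\bigl[(u-1)g^{(n)}(u)+n\,g^{(n-1)}(u)\bigr]$. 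Everything now hinges on an explicit expression for $g^{(n)}$ and on a clean way of extracting the limit.

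The second step, which I expect to be the main obstacle, is to prove by induction on $n$ that
\[
g^{(n)}(u)=\frac{(-1)^n}{u^n}\sum_{k=2}^{n+1}\frac{a_{n,k}}{(\ln u)^k},\qquad n\ge1.
\]
Differentiating this once and shifting the index in the resulting second sum yields the triangular recurrence $a_{n+1,k}=n\,a_{n,k}+(k-1)a_{n,k-1}$, to be read with $a_{n,k}=0$ for $k<2$ or $k>n+1$; the base case $n=1$ gives $a_{1,2}=1=0!$. It then remains to verify that the numbers defined by~\eqref{a=n=0} and~\eqref{a=n=i=eq} satisfy this recurrence. Writing $a_{n,i}=(i-1)!\,(n-1)!\,\sigma_{n,i}$, where $\sigma_{n,i}$ denotes the nested sum, the recurrence reduces to $\sigma_{n+1,i}-\sigma_{n,i}=\frac1n\,\sigma_{n,i-1}$, which follows instantly by peeling off the top term $\ell_1=n$ of the outer summation defining $\sigma_{n+1,i}$. (Up to this normalization the $a_{n,i}$ are Stirling numbers of the first kind, for which the recurrence is classical.) Along the way one records the boundary value $a_{n,n+1}=n!$, obtained either from the recurrence or by noting that the deepest nested sum has the single surviving term $\frac1{(n-1)!}$.

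For the third step I would render the limit transparent by pulling the factor $u^{-n}$, which tends to $1$, outside the bracket:
\[
(u-1)g^{(n)}(u)+n\,g^{(n-1)}(u)=\frac{(-1)^n}{u^n}\Bigl[(u-1)\sum_{k=2}^{n+1}\frac{a_{n,k}}{(\ln u)^k}-n\,u\sum_{k=2}^{n}\frac{a_{n-1,k}}{(\ln u)^k}\Bigr].
\]
Since $f$ is analytic at $x=0$, the expression in brackets is analytic at $u=1$, so its limit is simply the constant term of its Laurent expansion in $L=\ln u$. Now $u-1=e^{L}-1$ and $u=e^{L}$ have expansions with coefficients $\frac1{k!}$, so the negative powers of $L$ cancel automatically and the constant term is
\[
\sum_{k=2}^{n+1}\frac{a_{n,k}}{k!}-n\sum_{k=2}^{n}\frac{a_{n-1,k}}{k!}=\frac{1}{n+1}+\sum_{k=2}^{n}\frac{a_{n,k}-n\,a_{n-1,k}}{k!},
\]
where $a_{n,n+1}=n!$ supplies the isolated term $\frac1{n+1}$. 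Multiplying by $\frac{(-1)^n}{n!}$ produces exactly~\eqref{Bernulli-2rd=formula}. As a safeguard against sign or indexing errors I would check the output against the tabulated values $b_2=-\frac1{12}$ and $b_3=\frac1{24}$.
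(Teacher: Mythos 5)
Your proposal is correct, and it follows essentially the route the paper attributes to the cited source \cite{2rd-Bernoulli-2012.tex}: the paper itself gives no proof of Theorem~\ref{Bernulli-2rd=thm}, but states it was obtained ``by establishing an explicit formula for the $n$-th derivative of $\frac{1}{\ln x}$,'' which is precisely your induction $g^{(n)}(u)=\frac{(-1)^n}{u^n}\sum_{k=2}^{n+1}\frac{a_{n,k}}{(\ln u)^k}$ with the recurrence $a_{n+1,k}=na_{n,k}+(k-1)a_{n,k-1}$. Your limit extraction via the Laurent expansion in $L=\ln u$, with $a_{n,n+1}=n!$ supplying the term $\frac{1}{n+1}$, is sound and checks against $b_2=-\frac{1}{12}$ and $b_3=\frac{1}{24}$.
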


\par
In this paper, we will establish an integral representation and properties for Bernoulli numbers of the second kind $b_n$ and show that the generating function $\frac{x}{\ln(1+x)}$ in the left-hand side of~\eqref{bernoulli-second-dfn} is a Bernstein function on $(0,\infty)$.

\section{Some definitions, notions, and properties}

We first collect some necessary definitions and notations.

\begin{dfn}[\cite{mpf-1993, widder}]
A function $f$ is said to be completely monotonic on an interval $I$ if $f$ has derivatives of all orders on $I$ and
\begin{equation}
(-1)^{k-1}f^{(k-1)}(t)\ge0
\end{equation}
for $x \in I$ and $k \in\mathbb{N}$.
\end{dfn}

\begin{dfn}[\cite{compmon2, minus-one}]
A function $f$ is said to be logarithmically completely monotonic on an interval $I$ if its logarithm $\ln f$ satisfies
\begin{equation}
(-1)^k[\ln f(t)]^{(k)}\ge0
\end{equation}
for $k\in\mathbb{N}$ on $I$.
\end{dfn}

\begin{dfn}[\cite{Schilling-Song-Vondracek-2010, widder}]\label{Bernstein-funct-dfn}
A function $f:I\subseteq(-\infty,\infty)\to[0,\infty)$ is called a Bernstein function on $I$ if $f(t)$ has derivatives of all orders and $f'(t)$ is completely monotonic on $I$.
\end{dfn}

\begin{dfn}[{\cite[p.~19, Definition~2.1]{Schilling-Song-Vondracek-2010}}]
A Stieltjes function is a function $f:(0,\infty)\to[0,\infty)$ which can be written in the form
\begin{equation}\label{dfn-stieltjes}
f(x)=\frac{a}x+b+\int_0^\infty\frac1{s+x}{\td\mu(s)},
\end{equation}
where $a,b\ge0$ are nonnegative constants and $\mu$ is a nonnegative measure on $(0,\infty)$ such that
\begin{equation*}
\int_0^\infty\frac1{1+s}\td\mu(s)<\infty.
\end{equation*}
\end{dfn}

\begin{dfn}[{\cite[Definition~1]{psi-proper-fraction-degree-two.tex}}]\label{x-degree-dfn}
Let $f(t)$ be a function defined on $(0,\infty)$ and have derivatives of all orders. A number $r\in\mathbb{R}\cup\{\pm\infty\}$ is said to be the completely monotonic degree of $f(t)$ with respect to $t\in(0,\infty)$ if $t^rf(t)$ is a completely monotonic function on $(0,\infty)$ but $t^{r+\varepsilon}f(t)$ is not for any positive number $\varepsilon>0$.
\end{dfn}

We remark that Definition~\ref{x-degree-dfn} slightly but essentially modifies~\cite[Definition~1.5]{Koumandos-Pedersen-09-JMAA}.
\par
These classes of functions have the following properties and relations.

\begin{prop}[{\cite[p.~161, Theorem~12b]{widder}}]\label{Bernstein-Widder-Theorem-12b}
A necessary and sufficient condition that $f(x)$ should be completely monotonic for $0<x<\infty$ is that
\begin{equation} \label{berstein-1}
f(x)=\int_0^\infty e^{-xt}\td\alpha(t),
\end{equation}
where $\alpha(t)$ is non-decreasing and the integral converges for $0<x<\infty$.
\end{prop}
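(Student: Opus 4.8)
The plan is to establish the two implications separately, since the direction ``representation $\Rightarrow$ complete monotonicity'' is elementary whereas the converse requires constructing the measure. I will use the equivalent form of the definition, namely that $f$ is completely monotonic on $(0,\infty)$ exactly when $(-1)^nf^{(n)}(x)\ge0$ for every $n\ge0$ and every $x\in(0,\infty)$.

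For the \emph{sufficiency} of the representation, suppose $f(x)=\int_0^\infty e^{-xt}\td\alpha(t)$ with $\alpha$ non-decreasing and the integral convergent on $(0,\infty)$. Fix a compact subinterval $[c,d]\subset(0,\infty)$ and any $c'\in(0,c)$. For each $k$ the bound $t^ke^{-xt}\le t^ke^{-ct}\le M_{k}e^{-c't}$, valid for $x\in[c,d]$ with $M_k=\sup_{t\ge0}t^ke^{-(c-c')t}<\infty$, dominates the differentiated integrand by the $\alpha$-integrable function $M_ke^{-c't}$. Hence I may differentiate $k$ times under the integral sign to obtain $f^{(k)}(x)=\int_0^\infty(-t)^ke^{-xt}\td\alpha(t)$, so that $(-1)^kf^{(k)}(x)=\int_0^\infty t^ke^{-xt}\td\alpha(t)\ge0$ because $\alpha$ is non-decreasing. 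This already shows $f$ is completely monotonic.

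The \emph{necessity} is the substantial half: from a completely monotonic $f$ I must recover a representing $\alpha$. I would proceed by the Post--Widder inversion scheme, introducing the approximating distribution functions
\begin{equation*}
\alpha_k(t)=\frac{(-1)^k}{k!}\int_0^t\biggl(\frac ku\biggr)^{k+1}f^{(k)}\biggl(\frac ku\biggr)\td u .
\end{equation*}
Each $\alpha_k$ is non-decreasing precisely because the integrand equals $\frac1{k!}(k/u)^{k+1}\,(-1)^kf^{(k)}(k/u)\ge0$ by complete monotonicity. The programme then has three steps: (i) show that the $\alpha_k$ are uniformly bounded on compact $t$-intervals, using a fixed reference point $x_0$ and the value $f(x_0)$ as a mass budget; (ii) apply Helly's selection theorem to extract a subsequence converging weakly to a non-decreasing limit $\alpha$; and (iii) pass to the limit in the transforms $\int_0^\infty e^{-xt}\td\alpha_k(t)$ and identify the result with $f(x)$, which is the analytic core of the inversion theorem. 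Uniqueness of $\alpha$ up to normalisation then follows from the injectivity of the Laplace--Stieltjes transform.

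The hard part will be step (iii), namely interchanging the limit $k\to\infty$ with the integration against $e^{-xt}$. Weak convergence of $\alpha_k$ controls $\int g\,\td\alpha_k$ only for bounded continuous $g$ with no mass escaping to infinity, so a separate tightness estimate is needed to handle the tail and the possibly infinite total mass. I expect to obtain it by truncating at a large $T$ and bounding $\int_T^\infty e^{-xt}\td\alpha_k(t)\le e^{-(x-x_0)T}\int_0^\infty e^{-x_0t}\td\alpha_k(t)$ uniformly in $k$ for $x>x_0$, after which weak convergence on $[0,T]$ and letting $T\to\infty$ finish the identification. An alternative route, should the inversion estimates prove delicate, is to discretise: for fixed $h>0$ the sequence $f(a+nh)$ has alternating finite differences $(-1)^j\Delta^jf(a+nh)\ge0$, so Hausdorff's solution of the moment problem represents it as $\int_0^1 s^n\td\beta(s)$, and a limiting argument in $h$ reassembles the continuous representation.
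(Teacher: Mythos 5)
This proposition is imported background: the paper states it with a citation to Widder's book and supplies no proof of its own, so there is no internal argument to compare yours against; what follows judges your attempt on its merits. Your sufficiency half is complete and correct: the domination $t^ke^{-xt}\le M_ke^{-c't}$ for $x\in[c,d]$, $c'\in(0,c)$, with $e^{-c't}$ integrable against $\td\alpha$ because the transform converges at $c'$, legitimately justifies repeated differentiation under the integral sign, and the sign condition follows since $\alpha$ is non-decreasing.

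The necessity half, however, is a programme rather than a proof, and the two places you yourself flag are exactly where the substance lies. First, the ``mass budget'' of your step (i) is asserted, not established: to bound the $\alpha_k$ uniformly you need a quantitative consequence of complete monotonicity, the standard one being the Taylor-remainder inequality $f(y)\ge\frac{(x-y)^k}{k!}(-1)^kf^{(k)}(x)$ for $0<y<x$ (valid because every Taylor coefficient of $f$ at $x$ contributes a non-negative term at $y<x$ and $(-1)^kf^{(k)}$ is non-increasing); without it nothing prevents the $\alpha_k$ from accumulating unbounded mass on compacts, and your tail estimate $\int_T^\infty e^{-xt}\td\alpha_k(t)\le e^{-(x-x_0)T}\int_0^\infty e^{-x_0t}\td\alpha_k(t)$, though correct as an inequality, feeds on this same unproven uniform bound. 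Second, and more seriously, step (iii) --- the convergence $\int_0^\infty e^{-xt}\td\alpha_k(t)\to f(x)$ --- \emph{is} the Post--Widder inversion theorem, and its usual proof presupposes that $f$ already admits the representation being sought; invoked as a black box your argument is circular, and proving the convergence directly from complete monotonicity (substitute $u=k/t$, integrate by parts $k$ times with boundary terms controlled by the Taylor estimate above, and show the resulting kernel is an approximate identity concentrating at $x$) is precisely the analytic work your text leaves undone. Your fallback sketch is the better route: the sequence $\{f(a+nh)\}_{n\ge0}$ is completely monotonic since $(-1)^j\Delta^jf(a+nh)=h^j(-1)^jf^{(j)}(\xi)\ge0$ by the mean value theorem for finite differences, Hausdorff's theorem gives $f(a+nh)=\int_0^1s^n\td\beta_h(s)$, the substitution $s=e^{-ht}$ produces a Laplace--Stieltjes representation at the grid points, and Helly selection as $h\to0$ together with the continuity of $f$ reassembles the representation on all of $(0,\infty)$. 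That is essentially Widder's own proof in the cited reference, and its key ingredient (Theorem~4a on p.~108 of Widder) is even quoted later in this paper; but as submitted, neither route is carried to completion, so the necessity direction stands as an outline with the decisive estimates missing.
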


\begin{prop}[{\cite[p.~15, Theorem~3.2]{Schilling-Song-Vondracek-2010}}]
A function $f:(0,\infty)\to\mathbb{R}$ is a Bernstein function if and only if it admits the representation
\begin{equation}\label{Bernstein-integral-eq}
f(x)=a+bx+\int_0^\infty\bigl(1-e^{-xt}\bigr)\td\mu(t),
\end{equation}
where $a,b\ge0$ and $\mu$ is a measure on $(0,\infty)$ satisfying
\begin{equation*}
\int_0^\infty\min\{1,t\}\td\mu(t)<\infty.
\end{equation*}
\end{prop}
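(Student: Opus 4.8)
This statement is the classical Lévy--Khintchine-type representation of Bernstein functions, and the plan is to prove the two implications separately, using Proposition~\ref{Bernstein-Widder-Theorem-12b} as the bridge to the class of completely monotonic functions.

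For the sufficiency, suppose $f$ has the form~\eqref{Bernstein-integral-eq}. First I would differentiate under the integral sign, which is legitimate on $(0,\infty)$ because $\partial_x\bigl(1-e^{-xt}\bigr)=te^{-xt}$ and, for $x$ in any compact subinterval of $(0,\infty)$, this derivative is dominated by a $\mu$-integrable function via the elementary bound $te^{-xt}\le C\min\{1,t\}$ together with the hypothesis $\int_0^\infty\min\{1,t\}\td\mu(t)<\infty$. This produces $f'(x)=b+\int_0^\infty te^{-xt}\td\mu(t)$. Since $e^{-xt}$ is completely monotonic in $x$ for each fixed $t>0$ (indeed $(-1)^k\partial_x^k e^{-xt}=t^ke^{-xt}\ge0$), and since nonnegative constants and positively weighted integrals of completely monotonic functions are again completely monotonic, $f'$ is completely monotonic; combined with $f(x)\ge0$ on $(0,\infty)$ (each of $a$, $bx$, and $1-e^{-xt}$ being nonnegative there), this exhibits $f$ as a Bernstein function in the sense of Definition~\ref{Bernstein-funct-dfn}.

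For the necessity, suppose $f$ is a Bernstein function, so that $f'$ is completely monotonic. By Proposition~\ref{Bernstein-Widder-Theorem-12b} there is a nonnegative measure $\nu$ on $[0,\infty)$ with $f'(x)=\int_{[0,\infty)}e^{-xt}\td\nu(t)$. I would then isolate the atom at the origin by setting $b=\nu(\{0\})\ge0$, giving $f'(x)=b+\int_{(0,\infty)}e^{-xt}\td\nu(t)$, and define a measure $\mu$ on $(0,\infty)$ by $\td\mu(t)=\frac1t\td\nu(t)$, which recasts the previous line as $f'(x)=b+\int_{(0,\infty)}te^{-xt}\td\mu(t)$. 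Integrating from $x_0$ to $x$ and applying Tonelli's theorem to the nonnegative integrand yields $f(x)-f(x_0)=b(x-x_0)+\int_{(0,\infty)}\bigl(e^{-x_0t}-e^{-xt}\bigr)\td\mu(t)$.

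The crux is the passage $x_0\downarrow0$. Because $f'\ge0$, the function $f$ is nondecreasing and bounded below by $0$, so $a:=f(0^+)$ exists, is finite, and satisfies $a\ge0$. As $x_0\downarrow0$ the integrand $e^{-x_0t}-e^{-xt}$ increases monotonically to $1-e^{-xt}$, so the monotone convergence theorem gives $f(x)-a=bx+\int_0^\infty\bigl(1-e^{-xt}\bigr)\td\mu(t)$, which is precisely~\eqref{Bernstein-integral-eq}. Finiteness of the left-hand side forces $\int_0^\infty\bigl(1-e^{-xt}\bigr)\td\mu(t)<\infty$ for every $x>0$, and this is equivalent to the Lévy integrability condition because of the two-sided bound $(1-e^{-x})\min\{1,t\}\le 1-e^{-xt}\le\max\{1,x\}\min\{1,t\}$ on $(0,\infty)$. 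I expect this final stage to be the main obstacle: one must be careful that the mass at $0$ is split off into the constant $b$ rather than swept into $\mu$, that $f(0^+)$ is genuinely finite, and that monotone convergence is invoked to extract the representation and the integrability $\int_0^\infty\min\{1,t\}\td\mu(t)<\infty$ simultaneously.
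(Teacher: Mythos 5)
The paper offers no proof of this proposition at all: it is imported verbatim, with citation, from \cite[p.~15, Theorem~3.2]{Schilling-Song-Vondracek-2010}, so there is no internal argument to compare yours against. Your proof is correct and is essentially the standard one given in that reference: for sufficiency, differentiation under the integral (your domination $t^ke^{-xt}\le C_k\min\{1,t\}$ on compact subintervals is the right justification, and you rightly check $f\ge0$, which the paper's Definition~\ref{Bernstein-funct-dfn} requires); for necessity, Proposition~\ref{Bernstein-Widder-Theorem-12b} applied to $f'$, splitting the atom of $\nu$ at the origin into $b$, setting $\td\mu(t)=t^{-1}\td\nu(t)$, integrating via Tonelli, and passing $x_0\downarrow0$ by monotone convergence, with the two-sided bound $(1-e^{-x})\min\{1,t\}\le1-e^{-xt}\le\max\{1,x\}\min\{1,t\}$ correctly delivering the L\'evy integrability condition $\int_0^\infty\min\{1,t\}\td\mu(t)<\infty$.
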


\begin{prop}[{\cite{CBerg, absolute-mon-simp.tex, compmon2, minus-one}}] \label{L-C=relation}
Any logarithmically completely monotonic function must be completely monotonic.
\end{prop}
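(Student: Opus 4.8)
The plan is to prove the implication directly by induction on the order of the derivative, exploiting the first-order relation between $f$ and its logarithm. Write $\phi=\ln f$. Since $\phi$ is assumed to have derivatives of all orders on $I$, the function $f=e^{\phi}$ is well defined, smooth, and strictly positive; in particular $f(t)\ge0$, which is precisely the $k=1$ instance of complete monotonicity. Differentiating $f=e^{\phi}$ gives the key identity $f'=\phi'f$, and I would establish that $(-1)^{n}f^{(n)}(t)\ge0$ for every integer $n\ge0$ by induction on $n$, with the base case $n=0$ already recorded.

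For the inductive step I would apply the Leibniz rule to $f^{(n+1)}=(\phi'f)^{(n)}$, obtaining
\begin{equation*}
f^{(n+1)}=\sum_{j=0}^{n}\binom{n}{j}\phi^{(j+1)}f^{(n-j)}.
\end{equation*}
Multiplying through by $(-1)^{n+1}$ and splitting each sign as $(-1)^{n+1}=(-1)^{j+1}(-1)^{n-j}$ inside every summand yields
\begin{equation*}
(-1)^{n+1}f^{(n+1)}=\sum_{j=0}^{n}\binom{n}{j}\bigl[(-1)^{j+1}\phi^{(j+1)}\bigr]\bigl[(-1)^{n-j}f^{(n-j)}\bigr].
\end{equation*}
Each factor $(-1)^{j+1}\phi^{(j+1)}$ is nonnegative by the definition of logarithmic complete monotonicity, applied with $k=j+1\ge1$, and each factor $(-1)^{n-j}f^{(n-j)}$ is nonnegative by the induction hypothesis, since $0\le n-j\le n$. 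Consequently every term of the sum is a product of nonnegative quantities, so $(-1)^{n+1}f^{(n+1)}\ge0$, which closes the induction and shows that $f$ is completely monotonic.

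I expect the only delicate point to be the sign bookkeeping: one must check that the parities in the Leibniz expansion split exactly so that the logarithmic-monotonicity factor and the induction factor each carry the correct sign, and that the shifted index $n-j$ never exceeds $n$, so that the induction hypothesis genuinely applies to all summands. No analytic machinery beyond smoothness of $\phi$ is needed for this direction; in particular, the integral representation in Proposition~\ref{Bernstein-Widder-Theorem-12b} plays no role here and would be relevant only for a converse, which is false and is not asserted.
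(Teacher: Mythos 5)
Your argument is correct: the identity $f^{(n+1)}=\sum_{j=0}^{n}\binom{n}{j}\phi^{(j+1)}f^{(n-j)}$, the sign split $(-1)^{n+1}=(-1)^{j+1}(-1)^{n-j}$, and the (strong) induction hypothesis applied to all orders $n-j\le n$ fit together exactly as you claim. The paper states this Proposition without proof, citing the literature, and your Leibniz-rule induction on $f'=(\ln f)'f$ is precisely the standard argument given in those cited sources (e.g., the Qi--Chen paper), so your proposal matches the intended proof.
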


\begin{prop}[\cite{CBerg}]
The set of all Stieltjes functions is a subset of of logarithmically completely monotonic functions on $(0,\infty)$.
\end{prop}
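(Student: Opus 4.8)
The plan is to prove that a nonzero Stieltjes function $f$ is logarithmically completely monotonic by showing that $-(\ln f)'=-f'/f$ is completely monotonic on $(0,\infty)$. This is exactly the defining requirement: putting $g=-(\ln f)'$, the complete monotonicity inequalities $(-1)^{k-1}g^{(k-1)}\ge0$ become, after $g^{(k-1)}=-(\ln f)^{(k)}$, precisely $(-1)^k[\ln f]^{(k)}\ge0$ for $k\in\mathbb{N}$. First I would pass to the complex plane. From \eqref{dfn-stieltjes}, the function $f(z)=\frac az+b+\int_0^\infty\frac1{s+z}\td\mu(s)$ is holomorphic on $\mathbb{C}\setminus(-\infty,0]$, is real and positive on $(0,\infty)$, and, since $\operatorname{Im}\frac1{s+z}<0$ and $\operatorname{Im}\frac az<0$ whenever $\operatorname{Im}z>0$, it maps the upper half-plane $\mathbb{H}=\{\operatorname{Im}z>0\}$ into the lower half-plane. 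If $a=0$ and $\mu\equiv0$, then $f\equiv b$ and the claim is immediate, so I assume $f$ nonconstant; then $\operatorname{Im}f(z)<0$ strictly on $\mathbb{H}$, so $f$ is zero-free there.

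Because $f$ omits $0$ on $\mathbb{H}$ and is positive on $(0,\infty)$, the branch of $\ln f$ real on $(0,\infty)$ is holomorphic on the cut plane, with $\operatorname{Im}\ln f(z)=\arg f(z)\in(-\pi,0)$ for $z\in\mathbb{H}$. Hence $\phi:=-\ln f$ satisfies $\operatorname{Im}\phi(z)\in(0,\pi)$ on $\mathbb{H}$; that is, $\phi$ is a Pick (Nevanlinna) function, holomorphic on $\mathbb{C}\setminus(-\infty,0]$ and real on $(0,\infty)$. The Nevanlinna representation then gives $\phi(z)=\alpha+\beta z+\int_{\mathbb{R}}\bigl(\frac1{t-z}-\frac{t}{1+t^2}\bigr)\td\nu(t)$ with $\beta\ge0$ and $\nu\ge0$, and since $\phi$ is holomorphic and real across $(0,\infty)$, the measure $\nu$ is carried by $(-\infty,0]$. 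Differentiating and substituting $t=-s$ yields $-\frac{f'(x)}{f(x)}=\phi'(x)=\beta+\int_0^\infty\frac{\td\widetilde\nu(s)}{(s+x)^2}$ for $x>0$. Writing $\frac1{(s+x)^2}=\int_0^\infty te^{-st}e^{-xt}\td t$ and interchanging the order of integration presents $\phi'$ as $\int_0^\infty e^{-xt}\td\alpha(t)$ with $\alpha$ nondecreasing, so by Proposition~\ref{Bernstein-Widder-Theorem-12b} the function $-f'/f=\phi'$ is completely monotonic, which is exactly what is needed. Combined with Proposition~\ref{L-C=relation}, this also recovers the complete monotonicity of $f$.

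The routine parts, namely the half-plane mapping property and the termwise differentiation under the integral sign, are straightforward from \eqref{dfn-stieltjes} and the convergence hypothesis $\int_0^\infty\frac{\td\mu(s)}{1+s}<\infty$. I expect the genuine obstacle to be the passage to $\phi=-\ln f$ together with the identification of its Nevanlinna measure: one must argue carefully that $f$ is zero-free on $\mathbb{H}$ so that a single-valued logarithm exists, fix the correct branch, and then justify, via the reflection principle and holomorphy on the cut plane, that the representing measure $\nu$ places no mass on $[0,\infty)$. Once the support of $\nu$ is pinned to $(-\infty,0]$, the completely monotonic form of $\phi'$ falls out automatically.
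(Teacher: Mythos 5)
The paper itself gives no proof of this proposition: it is quoted from Berg's paper \cite{CBerg}, where the inclusion of the Stieltjes cone in the logarithmically completely monotonic functions is obtained along a different line, namely via the characterization of logarithmically completely monotonic functions as the infinitely divisible completely monotonic functions together with the stability of the Stieltjes class under fractional powers ($f\in\mathcal{S}$ and $0<\alpha<1$ imply $f^\alpha\in\mathcal{S}$, hence $f^\alpha$ is completely monotonic for every $\alpha>0$). Your argument is correct and is the natural direct alternative: you reduce the definition to the complete monotonicity of $-f'/f$, observe from \eqref{dfn-stieltjes} that $f$ is holomorphic on the cut plane, maps the upper half-plane into the lower one (strictly, once the trivial constant case $a=0$, $\mu\equiv0$ is set aside), hence is zero-free there and, by reflection, on all of $\mathbb{C}\setminus(-\infty,0]$, so that $\phi=-\ln f$ is a well-defined Pick function with $\operatorname{Im}\phi\in(0,\pi)$ on the upper half-plane; the Nevanlinna representation, with its measure pinned to $(-\infty,0]$ by Stieltjes inversion across $(0,\infty)$ where $\phi$ is real and holomorphic, then exhibits $\phi'(x)=\beta+\int_0^\infty(s+x)^{-2}\td\widetilde\nu(s)$ as a Laplace transform of a nonnegative measure, and Proposition~\ref{Bernstein-Widder-Theorem-12b} finishes the proof. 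All the steps you flag as delicate (single-valuedness of the logarithm, the branch normalization against the boundary values on $(0,\infty)$, the support of $\nu$, differentiation under the integral) are exactly the right ones and are standard. Your route and Berg's ultimately rest on the same half-plane mapping property of Stieltjes functions, but yours yields the stronger explicit conclusion that $-\ln f$ is a Pick function (equivalently, that $-f'/f$ is completely monotonic), whereas Berg's yields infinite divisibility as a by-product; note also that within this paper your conclusion could alternatively be phrased through Proposition~\ref{B-L=relation}, since your Nevanlinna analysis is essentially a proof that $1/f$ is a (complete) Bernstein function.
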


\begin{prop}[{\cite[pp.~161\nobreakdash--162, Theorem~3]{Chen-Qi-Srivastava-09.tex} and~\cite[p.~45, Proposition~5.17]{Schilling-Song-Vondracek-2010}}] \label{B-L=relation}
The reciprocal of any Bernstein function is logarithmically completely monotonic.
\end{prop}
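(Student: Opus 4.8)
The plan is to reduce the logarithmic complete monotonicity of $1/f$ to the complete monotonicity of a single logarithmic derivative, and then to realize that derivative as a product of completely monotonic functions. Let $f$ be a Bernstein function on $(0,\infty)$ with $f\not\equiv0$. By Definition~\ref{Bernstein-funct-dfn}, $f$ is smooth, $f\ge0$, and $f'$ is completely monotonic; since $a,bx\ge0$ and $1-e^{-xt}>0$ for $x,t>0$, the representation~\eqref{Bernstein-integral-eq} forces $f(x)>0$ for every $x>0$ (otherwise $a=b=0$ and $\mu\equiv0$, so $f\equiv0$), whence $1/f$ and $\ln f$ are smooth on $(0,\infty)$. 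I would first record the elementary equivalence that, for smooth positive $g$, the logarithmic complete monotonicity of $g$ coincides with the complete monotonicity of $\phi:=-(\ln g)'$: from $(\ln g)^{(k)}=-\phi^{(k-1)}$ for $k\ge1$ one sees that the inequalities $(-1)^k(\ln g)^{(k)}\ge0$ are exactly $(-1)^{k-1}\phi^{(k-1)}\ge0$. Applying this to $g=1/f$ and using $\ln(1/f)=-\ln f$ gives $-(\ln(1/f))'=(\ln f)'=f'/f$, so it suffices to prove that $f'/f$ is completely monotonic on $(0,\infty)$.

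Next I would factor $f'/f=f'\cdot\frac1f$ and use that the completely monotonic class is closed under products, which is immediate from the Leibniz rule since $(-1)^n(uv)^{(n)}=\sum_{k=0}^n\binom nk\bigl[(-1)^ku^{(k)}\bigr]\bigl[(-1)^{n-k}v^{(n-k)}\bigr]$ is a sum of nonnegative terms when $u,v$ are completely monotonic. As $f'$ is completely monotonic by hypothesis, the whole argument collapses to the single assertion that $1/f$ is completely monotonic.

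That assertion is the main obstacle. I would derive it from the identity $\frac1{f(x)}=\int_0^\infty e^{-sf(x)}\td s$, valid because $f>0$, combined with the fact that for each fixed $s\ge0$ the function $x\mapsto e^{-sf(x)}$ is completely monotonic on $(0,\infty)$. This fact is the composition principle that a completely monotonic function composed with a Bernstein function is completely monotonic, applied to $u\mapsto e^{-su}$, whose derivatives obey $(-1)^m(e^{-su})^{(m)}=s^me^{-su}\ge0$; its verification is the genuinely delicate step, because expanding $\frac{\td^n}{\td x^n}e^{-sf}$ by the Faà di Bruno formula yields a signed sum over partitions whose signs have to be reconciled using the complete monotonicity of every derivative $f^{(j)}$ with $j\ge1$. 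Granting it, differentiating under the integral sign---legitimate because $f'$ carries the representation~\eqref{berstein-1} supplied by Proposition~\ref{Bernstein-Widder-Theorem-12b}---propagates the inequalities $(-1)^n(e^{-sf})^{(n)}\ge0$ to $(-1)^n(1/f)^{(n)}\ge0$, so that $1/f$ is completely monotonic. Combining the reductions, $f'/f=f'\cdot(1/f)$ is completely monotonic, and hence $1/f$ is logarithmically completely monotonic, as claimed.
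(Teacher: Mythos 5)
Your argument is correct, but there is nothing in the paper to compare it with: Proposition~\ref{B-L=relation} is stated as a quoted result, with the proof delegated to \cite[Theorem~3]{Chen-Qi-Srivastava-09.tex} and \cite[Proposition~5.17]{Schilling-Song-Vondracek-2010}, so you have in effect supplied the proof the paper omits. Your route is the standard one: reduce logarithmic complete monotonicity of $1/f$ to complete monotonicity of $-(\ln(1/f))'=f'/f$, use closure of the completely monotonic class under products via Leibniz, and reduce everything to the complete monotonicity of $1/f$, which you obtain from $1/f(x)=\int_0^\infty e^{-sf(x)}\td s$ together with the composition principle. The two steps you flag but only sketch are indeed the only delicate points, and both go through. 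For the Fa\`a di Bruno sign count: a term of $(e^{-sf})^{(n)}$ indexed by $m_1+2m_2+\dotsb+nm_n=n$ with $k=m_1+\dotsb+m_n$ carries the factor $(-1)^ks^ke^{-sf}$ times $\prod_{j=1}^n\bigl(f^{(j)}\bigr)^{m_j}$, whose sign is $(-1)^{\sum_j(j-1)m_j}=(-1)^{n-k}$ because $(-1)^{j-1}f^{(j)}\ge0$ for $j\ge1$; hence every term of $(-1)^n(e^{-sf})^{(n)}$ is nonnegative. For the interchange of $\frac{\td^{\,n}}{\td x^n}$ with $\int_0^\infty\cdot\,\td s$: your appeal to Proposition~\ref{Bernstein-Widder-Theorem-12b} is misplaced (that result characterizes completely monotonic functions as Laplace transforms and does not license the interchange); the correct justification is dominated convergence on compact subintervals $[c,d]\subset(0,\infty)$, where $f(x)\ge f(c)>0$ since $f$ is increasing and each $\bigl|f^{(j)}(x)\bigr|\le\bigl|f^{(j)}(c)\bigr|$ for $j\ge1$ since $f'$ is completely monotonic, giving a bound $\bigl|\partial_x^ne^{-sf(x)}\bigr|\le P_n(s)e^{-sf(c)}$ with $P_n$ a polynomial, which is integrable in $s$. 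Finally, note that your third paragraph is exactly the proof, specialized to $g(u)=e^{-su}$, of the general composition theorem (\cite[Theorem~3.7]{Schilling-Song-Vondracek-2010}): a completely monotonic function composed with a Bernstein function is completely monotonic; citing that theorem and applying it directly to $g(u)=1/u$, which is completely monotonic on $(0,\infty)$, would collapse your integral representation and interchange argument into one line, since your formula $\int_0^\infty e^{-sf(x)}\td s$ is precisely the Laplace representation of $1/u$ composed with $f$.
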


For the history and survey of the notion ``logarithmically completely monotonic function'', please refer to~\cite[p.~2154, Remark~8]{subadditive-qi-guo-jcam.tex}, \cite[Introduction]{SCM-2012-0142.tex}, \cite[Remark~4.8]{Open-TJM-2003-Banach.tex}, and a lot of closely related references therein.
\par
For convenience, the notation
$\cmdeg{t}[f(t)]$
was designed in~\cite{psi-proper-fraction-degree-two.tex} to stand for the completely monotonic degree $r$ of $f(t)$ with respect to $t\in(0,\infty)$.
\par
It is obvious that the completely monotonic degree of any non-trivial Bernstein function on $(0,\infty)$ is greater than $0$.
\par
Bernstein functions have the following properties.

\begin{thm}\label{degree-Bernstein=-1-thm}
Let $f(x)$ be a Bernstein function on $(0,\infty)$. Then
\begin{equation}
\cmdeg{x}[f(x)]\ge-1.
\end{equation}
In other words, the completely monotonic degree of any Bernstein function on $(0,\infty)$ is not less than $-1$.
\end{thm}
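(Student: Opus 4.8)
The plan is to reduce the claim to a single complete-monotonicity statement and then verify that statement using the integral representation of Bernstein functions together with Widder's theorem.

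First I would observe that, by Definition~\ref{x-degree-dfn}, it suffices to prove that $\frac{f(x)}{x}=x^{-1}f(x)$ is completely monotonic on $(0,\infty)$. Indeed, the set of exponents $s$ for which $x^sf(x)$ is completely monotonic is downward closed: if $x^sf(x)$ is completely monotonic and $s'\le s$, then $x^{s'}f(x)=x^{-(s-s')}\cdot x^sf(x)$ is a product of the two completely monotonic functions $x^{-(s-s')}$ and $x^sf(x)$, hence is itself completely monotonic. Consequently the completely monotonic degree is the supremum of all such $s$, and exhibiting $x^{-1}f(x)$ as completely monotonic forces $\cmdeg{x}[f(x)]\ge-1$.

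Next I would insert the integral representation~\eqref{Bernstein-integral-eq}, writing $f(x)=a+bx+\int_0^\infty\bigl(1-e^{-xt}\bigr)\td\mu(t)$ with $a,b\ge0$ and $\int_0^\infty\min\{1,t\}\td\mu(t)<\infty$, and divide by $x$ to obtain
\[
\frac{f(x)}{x}=\frac ax+b+\int_0^\infty\frac{1-e^{-xt}}{x}\td\mu(t).
\]
The key elementary identity is $\frac{1-e^{-xt}}{x}=\int_0^t e^{-xs}\td s$, which displays the integrand as a superposition of the decaying exponentials $e^{-xs}$. Combined with $\frac ax=a\int_0^\infty e^{-xs}\td s$ and the completely monotonic constant $b$, this should let me recast $\frac{f(x)}{x}$, after interchanging the order of integration by Tonelli's theorem, in the Laplace-transform form $b+\int_0^\infty e^{-xs}\bigl[a+\mu((s,\infty))\bigr]\td s$. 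Since the density $a+\mu((s,\infty))$ is nonnegative, Proposition~\ref{Bernstein-Widder-Theorem-12b} (Widder's characterization) identifies this integral as a completely monotonic function, and adding the nonnegative constant $b$ preserves complete monotonicity.

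The main thing to be careful about is the analytic bookkeeping around the measure $\mu$: I must check that $\int_0^\infty\frac{1-e^{-xt}}{x}\td\mu(t)$ converges for every $x>0$, using $\frac{1-e^{-xt}}{x}\sim t$ as $t\to0^+$ against $\int_0^1 t\,\td\mu(t)<\infty$ and $\frac{1-e^{-xt}}{x}\le\frac1x$ against $\int_1^\infty\td\mu(t)<\infty$, and that the tail function $s\mapsto\mu((s,\infty))$ is measurable and yields a genuine nonnegative measure so that Widder's theorem applies. This interchange-and-convergence step, rather than any deep idea, is where the real work lies; everything else follows directly from the Bernstein representation and the downward-closedness of the completely monotonic exponents.
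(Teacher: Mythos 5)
Your proposal is correct and takes essentially the same route as the paper: both divide the Bernstein representation~\eqref{Bernstein-integral-eq} by $x$ and rest on the fact that $\frac{1-e^{-xt}}{x}$ is completely monotonic in $x$ because it is a Laplace transform --- the paper writes the kernel as $q(u)=\int_{1/e}^1 s^{u-1}\td s$ (equivalently $\int_0^1 e^{-uv}\td v$) and argues termwise under the integral against $\td\mu$, while you use the identity $\frac{1-e^{-xt}}{x}=\int_0^t e^{-xs}\td s$ and a Tonelli swap to exhibit $\frac{f(x)}{x}-b$ as a single Laplace transform with density $a+\mu((s,\infty))$. Your version is sound, and your convergence bookkeeping (finiteness of $\mu((s,\infty))$ for $s>0$ via $\int_0^\infty\min\{1,t\}\td\mu(t)<\infty$, and the downward-closedness of the set of admissible exponents) makes explicit two points the paper leaves implicit.
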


\begin{proof}
Differentiating on both sides of the formula~\eqref{Bernstein-integral-eq} gives
\begin{equation*}
f'(x)=b+\int_0^\infty e^{-xt}t\td\mu(t).
\end{equation*}
By Definition~\ref{Bernstein-funct-dfn}, the derivative $f'(x)$ is a completely monotonic function on $(0,\infty)$. In virtue of Proposition~\ref{Bernstein-Widder-Theorem-12b}, it is derived that the measure $\mu(t)$ in~\eqref{Bernstein-integral-eq} is non-decreasing on $(0,\infty)$.
Dividing by $x$ on both sides of the formula~\eqref{Bernstein-integral-eq} leads to
\begin{equation}\label{Bernstein-divide-by-x}
\frac{f(x)}x=\frac{a}x+b+\int_0^\infty q(xt)t\td\mu(t),
\end{equation}
where
\begin{equation*}
q(u)=\frac{1-e^{-u}}u
\end{equation*}
for $u\in(0,\infty)$. By some closely related knowledge in the papers~\cite{emv-log-convex-simple.tex, mon-element-exp-final.tex, Guo-Qi-Filomat-2011-May-12.tex, mon-element-exp-gen.tex, pams-62, steffensen-pair-Anal, qcw, onsp, jmaa-ii-97, AMSPROC.TEX, best-constant-one-simple-real.tex}, we find that
\begin{equation*}
q(u)=\int_{1/e}^1s^{u-1}\td s
\end{equation*}
and
\begin{equation*}
q^{(i)}(u)=\int_{1/e}^1(\ln s)^{i}s^{u-1}\td s,
\end{equation*}
which implies that the function $q(u)$ is completely monotonic on $(0,\infty)$.
Consequently, we have
\begin{equation*}
\frac{\td^{\,i}q(xt)}{\td x^i}=t^iq^{(i)}(xt),
\end{equation*}
which means that the function $q(xt)$ is completely monotonic with respect to $x\in(0,\infty)$. Accordingly, the very right term in~\eqref{Bernstein-divide-by-x} is a completely monotonic function of $x$. As a result, the function $x^{-1}f(x)$ is completely monotonic on $(0,\infty)$, that is,
\begin{equation*}
\cmdeg{x}[f(x)]\ge-1.
\end{equation*}
The proof of Theorem~\ref{degree-Bernstein=-1-thm} is complete.
\end{proof}

\begin{thm}
The completely monotonic degree of the reciprocal of a Bernstein function on $(0,\infty)$ is non-negative and less than $1$.
\end{thm}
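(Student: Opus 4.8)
The plan is to prove the two bounds separately, relying on the structural properties of Bernstein functions already collected above. For the lower bound $\cmdeg{x}[1/f(x)]\ge0$, I would invoke the chain of inclusions recorded in the excerpt: by Proposition~\ref{B-L=relation} the reciprocal $1/f$ of a Bernstein function is logarithmically completely monotonic on $(0,\infty)$, and by Proposition~\ref{L-C=relation} every logarithmically completely monotonic function is completely monotonic. Hence $1/f$ is itself completely monotonic, that is, $x^0\cdot\frac1{f(x)}$ is completely monotonic, and Definition~\ref{x-degree-dfn} then gives $\cmdeg{x}[1/f(x)]\ge0$ at once.

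For the upper bound I would first reduce the claim that the degree is less than $1$ to the single assertion that $x/f(x)$ is \emph{not} completely monotonic. Indeed, if the degree were $\ge1$, then $x^{r}/f(x)$ would be completely monotonic for some $r\ge1$; writing $x/f(x)=x^{\,1-r}\cdot\bigl[x^{r}/f(x)\bigr]$ and noting that $x^{1-r}$ is completely monotonic whenever $1-r\le0$, the fact that a product of two completely monotonic functions is completely monotonic would force $x/f(x)$ to be completely monotonic. So ruling out this one behaviour suffices.

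To show $x/f(x)$ is not completely monotonic, I would exploit that a completely monotonic function is necessarily non-increasing, and prove instead that $g(x):=x/f(x)$ is non-decreasing. Since $f$ is Bernstein, $f'$ is completely monotonic by Definition~\ref{Bernstein-funct-dfn}, whence $f''\le0$ and $f$ is concave; moreover the representation~\eqref{Bernstein-integral-eq} yields $f(0^+)=a\ge0$. Computing $g'(x)=\frac{f(x)-xf'(x)}{f(x)^2}$ and observing that $\frac{\td}{\td x}\bigl[f(x)-xf'(x)\bigr]=-xf''(x)\ge0$ shows $f(x)-xf'(x)\ge f(0^+)=a\ge0$, so $g'(x)\ge0$ and $g$ is non-decreasing. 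A non-decreasing completely monotonic function must be constant, so $x/f(x)$ fails to be completely monotonic, and therefore $\cmdeg{x}[1/f(x)]<1$.

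The main obstacle is precisely the strict inequality in the upper bound. The concavity argument only delivers that $g=x/f$ is non-decreasing, which excludes completely monotonic behaviour in every case \emph{except} the boundary one where $g$ is constant, i.e.\ where $f(x)=bx$ is linear; there the degree equals $1$ exactly. I would therefore need either to treat this degenerate linear case separately or to build a non-triviality hypothesis into the statement. Apart from this edge case, the key step is the passage from ``$f$ concave with $f(0^+)\ge0$'' to ``$x/f$ non-decreasing'', which is what breaks complete monotonicity and pins the degree below $1$.
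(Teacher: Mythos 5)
Your proposal is correct, and its two halves align with and diverge from the paper's proof in an instructive way. The lower bound is exactly the paper's argument: Propositions~\ref{B-L=relation} and~\ref{L-C=relation} give complete monotonicity of $1/f$, hence $\cmdeg{x}[1/f(x)]\ge0$. For the upper bound the paper simply cites its Theorem~\ref{degree-Bernstein=-1-thm} ($x^{-1}f(x)$ is completely monotonic, hence non-increasing) to conclude that $x/f(x)$ is non-decreasing, whereas you re-derive that monotonicity from scratch: $f'$ completely monotonic forces $f''\le0$, and $f(x)-xf'(x)\ge f(0^+)=a\ge0$ gives $g'\ge0$. Your version is self-contained (it bypasses the measure-theoretic proof of Theorem~\ref{degree-Bernstein=-1-thm}) at the cost of one unremarked limit, $xf'(x)\to0$ as $x\to0^+$, which does hold by dominated convergence in the representation~\eqref{Bernstein-integral-eq}; alternatively, the tangent-line inequality $f(0^+)\le f(x)-xf'(x)$, valid for any concave $f$, avoids that limit entirely. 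You also supply a step the paper leaves implicit: the reduction showing that degree $\ge1$ would force $x/f(x)$ to be completely monotonic, via the factorization $x/f(x)=x^{1-r}\cdot\bigl[x^{r}/f(x)\bigr]$ and closure of complete monotonicity under products --- necessary since the degree need not be an integer. Finally, your closing caveat identifies a genuine flaw in the theorem as stated rather than in your argument: for the trivial Bernstein function $f(x)=bx$ the quotient $x/f(x)$ is constant, hence completely monotonic, and $\cmdeg{x}[1/f(x)]=1$ exactly; the paper's own inference ``non-decreasing, that is, not completely monotonic'' silently fails in this same constant case, so the non-triviality hypothesis you propose (which the paper invokes only informally elsewhere) is genuinely needed, and you were right to flag it.
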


\begin{proof}
Let $f(x)$ be a Bernstein function on $(0,\infty)$. Then, by Theorem~\ref{degree-Bernstein=-1-thm}, the function $x^{-1}f(x)$ is completely monotonic, so its reciprocal $\frac{x}{f(x)}$ is non-decreasing, that is, the reciprocal $\frac{x}{f(x)}$ is not completely monotonic. Thus, the degree of $\frac1{f(x)}$ is less than $1$. However, by Propositions~\ref{B-L=relation} and~\ref{L-C=relation}, the reciprocal $\frac1{f(x)}$ is completely monotonic on $(0,\infty)$, so its completely monotonic degree is non-negative.
\end{proof}

\section{A lemma}

For establishing integral representations for Bernoulli numbers of the second kind $b_n$, we need the following lemma.

\begin{lem}[{\cite[p.~2130]{Berg-Pedersen-ball-PAMS}}]
The function $\frac1{\ln(1+x)}$ is a Stieltjes function and has the integral representation
\begin{equation}\label{recip-ln(1+z)}
\frac1{\ln(1+z)}=\frac1{z}+\int_1^\infty\frac1{[\ln(t-1)]^2+\pi^2} \frac{\td t}{z+t}
\end{equation}
for $z\in\mathbb{C}\setminus(-\infty,0]$.
\end{lem}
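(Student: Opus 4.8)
The plan is to construct the representing measure explicitly by the Stieltjes inversion (Cauchy transform) technique and then to verify that the resulting data satisfy every requirement in the definition~\eqref{dfn-stieltjes} of a Stieltjes function. First I would introduce the auxiliary function
\[
g(z)=\frac1{\ln(1+z)}-\frac1z
\]
and check that it is holomorphic on the cut plane $\mathbb{C}\setminus(-\infty,-1]$. Indeed, the principal branch of $\ln(1+z)$ is holomorphic and nonvanishing there except for the simple zero at $z=0$, and the expansion $\ln(1+z)=z-\frac{z^2}2+\cdots$ shows that $\frac1{\ln(1+z)}=\frac1z+\frac12+O(z)$ near the origin, so subtracting $\frac1z$ removes the resulting simple pole and leaves $g$ holomorphic across $z=0$.

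Next I would record the boundary behaviour of $g$ along the cut. For $t>1$ and $z=-t+i0^{+}$ one has $1+z\to-(t-1)$ from the upper half-plane, so that $\ln(1+z)\to\ln(t-1)+i\pi$ and hence
\[
g(-t+i0^{+})=\frac1{\ln(t-1)+i\pi}+\frac1t ,
\qquad
\mathrm{Im}\,g(-t+i0^{+})=\frac{-\pi}{[\ln(t-1)]^2+\pi^2}.
\]
By Schwarz reflection $g(\bar z)=\overline{g(z)}$, so the jump of $g$ across the cut is purely imaginary and the density of the representing measure is
\[
\rho(t)=-\frac1\pi\,\mathrm{Im}\,g(-t+i0^{+})=\frac1{[\ln(t-1)]^2+\pi^2}.
\]
On the segment $-1<z<0$ the quantity $1+z$ stays in $(0,1)$, so $\ln(1+z)$ remains real and there is no jump there; this confines the support of the measure to $[1,\infty)$, in agreement with the lower limit of the integral in~\eqref{recip-ln(1+z)}.

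I would then justify the Cauchy integral representation $g(z)=\frac1{2\pi i}\oint\frac{g(w)}{w-z}\,dw$ over a keyhole contour hugging the cut together with a large circle, let the radius of the circle tend to infinity and let the keyhole collapse onto the cut, and read off $g(z)=\int_1^\infty\frac{\rho(t)}{t+z}\,dt$; adding back $\frac1z$ reproduces exactly~\eqref{recip-ln(1+z)}. Finally, to conclude that the right-hand side is a Stieltjes function in the sense of~\eqref{dfn-stieltjes}, I would note that $a=1\ge0$, $b=0\ge0$, and $\rho(t)\ge0$, and verify the integrability condition $\int_1^\infty\frac{\rho(t)}{1+t}\,dt<\infty$ from the asymptotics $\rho(t)\sim(\ln t)^{-2}$ as $t\to\infty$ (whence the integrand behaves like $[t(\ln t)^2]^{-1}$) together with $\rho(t)\to0$ as $t\to1^{+}$.

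The main obstacle is the rigorous justification of the contour step. One must control the contribution of the large circle, which requires the decay $g(z)\to0$ as $|z|\to\infty$ (valid since both $\frac1{\ln(1+z)}$ and $\frac1z$ vanish at infinity), and the contribution of the small arc around the branch point $z=-1$, where $g$ stays bounded because $\frac1{\ln(1+z)}\to0$ while $\frac1z\to-1$. Most importantly, one must confirm that $g$ is a Pick--Nevanlinna function, that is, that it maps the upper half-plane into the lower half-plane, so that the Cauchy transform of its boundary jump reproduces $g$ itself rather than $g$ plus an entire correction; the Liouville argument then closes the gap, since the difference is entire and tends to $0$. Once the half-plane property and these boundary estimates are secured, the remaining manipulations are elementary.
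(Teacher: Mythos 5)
Your proposal is correct in substance, but there is nothing in the paper to compare it against line by line: the paper does not prove this lemma at all. It is quoted from Berg and Pedersen \cite[p.~2130]{Berg-Pedersen-ball-PAMS}, and the accompanying remark only notes that \eqref{recip-ln(1+z)} corrects a misprint in equation~(34) of that source. What you have done is reconstruct, from scratch, the standard Stieltjes--Perron inversion argument underlying such representations: you identify $g(z)=\frac1{\ln(1+z)}-\frac1z$ as holomorphic on $\mathbb{C}\setminus(-\infty,-1]$ (the subtraction correctly removes the simple pole at $z=0$, by the expansion $\frac1{\ln(1+z)}=\frac1z+\frac12+O(z)$), compute the boundary jump with imaginary part $-\pi/\{[\ln(t-1)]^2+\pi^2\}$ on the cut, and recover $g$ as the Cauchy transform of that jump via a keyhole contour, using the logarithmic decay $g(w)=O(1/\ln|w|)$ on the large circle and the boundedness of $g$ near the branch point $w=-1$. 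Your verification of the Stieltjes axioms in \eqref{dfn-stieltjes} ($a=1$, $b=0$, nonnegative density, and $\int_1^\infty\frac{\td t}{(1+t)\{[\ln(t-1)]^2+\pi^2\}}<\infty$ from the asymptotics $(\ln t)^{-2}$) is also right. This buys something the paper deliberately outsources: an independent derivation that confirms the \emph{corrected} kernel, which has real value here precisely because the paper's own remark concedes that the cited formula is wrong as printed.

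Two points deserve tightening, though neither is a genuine gap. First, your closing paragraph conflates two alternative ways to finish: if you run the keyhole argument directly (collapse the keyhole at fixed truncation radius $R$, then let $R\to\infty$) and conclude with Liouville, the Pick property of $g$ is never needed; conversely, if you verify $\operatorname{Im}g\le0$ on the upper half-plane and invoke the Nevanlinna--Stieltjes representation theorem, the keyhole is never needed. Either route alone suffices; presenting both as jointly necessary muddies an otherwise clean outline. Second, the two one-sided integrals along the cut are not separately absolutely convergent at infinity, since $\operatorname{Re}g(-t\pm i0)\sim1/\ln t$ makes each behave like $\int\frac{\td t}{t\ln t}$; only after pairing the two sides, where the real parts cancel and the surviving imaginary part decays like $t^{-1}(\ln t)^{-2}$, does the limit exist. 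So the two horizontal segments of the keyhole must be combined before passing to the limit --- routine bookkeeping, but your write-up should fix the order of limits explicitly.
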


\begin{rem}
We note that the integral representation~\eqref{recip-ln(1+z)} is a correction of the equation~(34) on page~2130 in~\cite{Berg-Pedersen-ball-PAMS}.
\end{rem}

\section{Integral representations for Bernoulli numbers}

We are now in a position to establish integral representations of Bernoulli numbers of the second kind $b_k$.

\begin{thm}\label{thm-Bernoulli-Integ-Form}
The Bernoulli numbers of the second kind $b_n$ for $n\in\mathbb{N}$ may be calculated by
\begin{equation}\label{Bernoulli-2rd-int}
b_n=(-1)^{n+1}\int_1^\infty\frac{1}{\{[\ln(t-1)]^2+\pi^2\}t^{n}}\td t.
\end{equation}
\end{thm}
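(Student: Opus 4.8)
The plan is to read the coefficients $b_n$ straight off the Stieltjes representation of $\frac1{\ln(1+x)}$ furnished by the lemma. First I would put $z=x$ in~\eqref{recip-ln(1+z)} and multiply through by $x$; the left-hand side then becomes the generating function in~\eqref{bernoulli-second-dfn}, and the singular term $\frac1x$ is absorbed, yielding
\begin{equation*}
\frac{x}{\ln(1+x)}=1+\int_1^\infty\frac{1}{[\ln(t-1)]^2+\pi^2}\frac{x}{x+t}\td t.
\end{equation*}

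Next I would expand the rational factor as a geometric series in $x$, writing $\frac{x}{x+t}=\frac{x/t}{1+x/t}=\sum_{n=1}^\infty(-1)^{n-1}\frac{x^n}{t^n}$, which is valid for every $t\ge1$ provided $|x|<1$. Inserting this into the integral and interchanging summation and integration would give
\begin{equation*}
\frac{x}{\ln(1+x)}=1+\sum_{n=1}^\infty(-1)^{n-1}\Biggl(\int_1^\infty\frac{1}{\{[\ln(t-1)]^2+\pi^2\}t^n}\td t\Biggr)x^n.
\end{equation*}
Comparing this with~\eqref{bernoulli-second-dfn} and invoking the uniqueness of power-series coefficients, I would match the coefficient of $x^n$ to conclude $b_0=1$ and, for $n\ge1$, that $b_n=(-1)^{n-1}\int_1^\infty\frac{1}{\{[\ln(t-1)]^2+\pi^2\}t^n}\td t$, which is exactly~\eqref{Bernoulli-2rd-int} once one notes $(-1)^{n-1}=(-1)^{n+1}$.

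The main obstacle is justifying the exchange of $\sum$ and $\int$, and I expect to handle it by Tonelli's theorem. The kernel $\frac1{[\ln(t-1)]^2+\pi^2}$ is nonnegative and bounded above by $\frac1{\pi^2}$, it tends to $0$ as $t\to1^+$ because $[\ln(t-1)]^2\to\infty$, and it decays like $\frac1{[\ln t]^2}$ as $t\to\infty$. Summing the absolute values of the series produces the integrand $\frac{1}{[\ln(t-1)]^2+\pi^2}\frac{|x|}{t-|x|}$, whose integral over $(1,\infty)$ is finite for each fixed $|x|<1$; this legitimises the interchange on the disc $|x|<1$. The individual termwise integrals converge as well, since the factor $t^{-n}$ with $n\ge1$ renders the integrand integrable at infinity while the vanishing of the kernel controls the behaviour near $t=1$. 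With the interchange secured, the coefficient comparison completes the proof.
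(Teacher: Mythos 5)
Your proof is correct, and it reaches \eqref{Bernoulli-2rd-int} by a genuinely different coefficient-extraction mechanism than the paper's. Both arguments begin identically: take the Berg--Pedersen representation \eqref{recip-ln(1+z)}, multiply through by $x$, and arrive at \eqref{frac(x)(ln(1+x))-eq1}. From there the paper differentiates $k$ times under the integral sign to obtain the closed form \eqref{deriv-int-expression}, differentiates the defining series \eqref{bernoulli-second-dfn} termwise to obtain \eqref{deriv-dfn-second}, equates the two in \eqref{combine-two-eq-bernoulli}, and lets $x\to0^+$ so that only the term $k!\,b_k$ survives on the series side. You instead expand the kernel geometrically, $\frac{x}{x+t}=\sum_{n\ge1}(-1)^{n-1}x^n/t^n$ for $|x|<1\le t$, interchange $\sum$ and $\int$, and match power-series coefficients; these are the two standard dual ways of reading Taylor coefficients at $0$ off a Stieltjes transform, so the difference is one of technique rather than substance. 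If anything, your version is the more carefully justified: your bound $\sum_n|x|^n/t^n=\frac{|x|}{t-|x|}$, integrable against the kernel on $(1,\infty)$ for each fixed $|x|<1$ (bounded near $t=1$ since the kernel vanishes there, and of order $\frac{|x|}{t(\ln t)^2}$ at infinity), is spelled out, whereas the paper passes derivatives through the integral in \eqref{deriv-int-expression} and takes the limit $x\to0^+$ in \eqref{combine-two-eq-bernoulli} without comment. Your route also yields $b_0=1$ as a free consistency check, and correctly restricts the formula to $n\ge1$ (for $n=0$ the integral diverges, since the integrand behaves like $1/(\ln t)^2$ at infinity). One cosmetic caution: name the interchange Fubini--Tonelli rather than Tonelli alone --- the series is alternating, so strictly you apply Tonelli to the absolute series to establish product integrability and then Fubini to the signed one, which is exactly what your argument in fact does.
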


\begin{proof}
By~\eqref{recip-ln(1+z)}, we have
\begin{equation}\label{frac(x)(ln(1+x))-eq1}
\frac{x}{\ln(1+x)}=1+\int_1^\infty\frac1{[\ln(t-1)]^2+\pi^2} \frac{x}{x+t}\td t
\end{equation}
and
\begin{equation}
\begin{split}\label{deriv-int-expression}
\biggl[\frac{x}{\ln(1+x)}\biggr]^{(k)}&=\int_1^\infty\frac1{[\ln(t-1)]^2+\pi^2} \biggl(\frac{x}{x+t}\biggr)^{(k)}\td t\\
&=\int_1^\infty\frac1{[\ln(t-1)]^2+\pi^2} \biggl(1-\frac{t}{x+t}\biggr)^{(k)}\td t\\
&=(-1)^{k+1}k!\int_1^\infty\frac{t}{[\ln(t-1)]^2+\pi^2} \frac1{(x+t)^{k+1}}\td t
\end{split}
\end{equation}
for $k\in\mathbb{N}$. On the other hand, by~\eqref{bernoulli-second-dfn}, we also have
\begin{equation}\label{deriv-dfn-second}
\biggl[\frac{x}{\ln(1+x)}\biggr]^{(k)}=\sum_{n=k}^\infty b_n\frac{n!}{(n-k)!}x^{n-k}.
\end{equation}
Combining~\eqref{deriv-int-expression} with~\eqref{deriv-dfn-second} leads to
\begin{equation}\label{combine-two-eq-bernoulli}
\sum_{n=k}^\infty b_n\frac{n!}{(n-k)!}x^{n-k}=(-1)^{k+1}k!\int_1^\infty\frac{t}{[\ln(t-1)]^2+\pi^2} \frac1{(x+t)^{k+1}}\td t.
\end{equation}
Letting $x\to0^+$ on both sides of the above equation produces
\begin{equation*}
k!b_k=(-1)^{k+1}k!\int_1^\infty\frac1{[\ln(t-1)]^2+\pi^2} \frac1{t^{k}}\td t.
\end{equation*}
Thus, the formula~\eqref{Bernoulli-2rd-int} is proved.
\end{proof}

\section{Properties of Bernoulli numbers of the second kind}

Basing on the integral representation~\eqref{Bernoulli-2rd-int} in Theorem~\ref{thm-Bernoulli-Integ-Form} for Bernoulli numbers of the second kind $b_n$, we now turn our attention to investigate the complete monotonicity and other properties of Bernoulli numbers $b_n$ for $n\in\mathbb{N}$.
\par
We recall from~\cite[p.~108, Definition~4]{widder} that a sequence $\{\mu_n\}_0^\infty$ is said to be completely monotonic if its elements are non-negative and its successive differences are alternatively non-negative, that is
\begin{equation}
(-1)^k\Delta^k\mu_n\ge0
\end{equation}
for $n,k\ge0$, where
\begin{equation}
\Delta^k\mu_n=\sum_{m=0}^k(-1)^m\binom{k}{m}\mu_{n+k-m}.
\end{equation}
Theorem~4a in~\cite[p.~108]{widder} reads that a necessary and sufficient condition that the sequence $\{\mu_n\}_0^\infty$ should have the expression
\begin{equation}\label{mu=alpha-moment}
\mu_n=\int_0^1t^n\td\alpha(t)
\end{equation}
for $n\ge0$, where $\alpha(t)$ is non-decreasing and bounded for $0\le t\le1$, is that it should be completely monotonic.
\par
We also recall from~\cite[p.~163, Definition~14a]{widder} that a completely monotonic sequence $\{a_n\}_0^\infty$ is minimal if it ceases to be completely monotonic when $a_0$ is decreased.
Theorem~14a in~\cite[p.~164]{widder} states that a completely monotonic sequence $\{\mu_n\}_0^\infty$ is minimal if and only if the equality~\eqref{mu=alpha-moment} is valid for $n\ge0$ and $\alpha(t)$ is a non-decreasing bounded function continuous at $t=0$.

\begin{thm}\label{Ber-minimal-thm}
The sequence $\{(-1)^{n}b_{n+1}\}_{n=0}^\infty$ of Bernoulli numbers of the second kind is completely monotonic and minimal.
\end{thm}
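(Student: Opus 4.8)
The plan is to rewrite $(-1)^n b_{n+1}$ as a Hausdorff moment $\int_0^1 s^n\td\alpha(s)$ with a suitable non-decreasing bounded $\alpha$, and then to read off both conclusions from the theorems of Widder recalled above: Theorem~4a in~\cite[p.~108]{widder} for complete monotonicity and Theorem~14a in~\cite[p.~164]{widder} for minimality.

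First I would replace $n$ by $n+1$ in the integral representation~\eqref{Bernoulli-2rd-int} of Theorem~\ref{thm-Bernoulli-Integ-Form}; since $(-1)^{(n+1)+1}=(-1)^n$, this gives $b_{n+1}=(-1)^n\int_1^\infty\frac{\td t}{\{[\ln(t-1)]^2+\pi^2\}t^{n+1}}$, and multiplying through by $(-1)^n$ yields
\begin{equation*}
(-1)^n b_{n+1}=\int_1^\infty\frac{1}{\{[\ln(t-1)]^2+\pi^2\}t^{n+1}}\td t.
\end{equation*}
Next I would apply the substitution $s=1/t$, under which $t-1=\frac{1-s}{s}$ and $\td t=-\frac{\td s}{s^2}$ and which carries $t\in(1,\infty)$ onto $s\in(0,1)$. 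A brief computation then produces the moment representation
\begin{equation*}
(-1)^n b_{n+1}=\int_0^1 s^n\td\alpha(s),\qquad \td\alpha(s)=\frac{\td s}{s\bigl\{\bigl[\ln\frac{1-s}{s}\bigr]^2+\pi^2\bigr\}},
\end{equation*}
valid for every $n\ge0$.

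To invoke Widder's Theorem~4a I must check that $\alpha(t)=\int_0^t\td\alpha(s)$ is non-decreasing and bounded on $[0,1]$. Monotonicity is clear since the density is positive, so everything reduces to the behaviour of the density at the two endpoints. Near $s=1$ the term $\bigl[\ln\frac{1-s}{s}\bigr]^2$ diverges, the density tends to $0$, and there is no singularity. Near $s=0$ one has $\ln\frac{1-s}{s}\sim-\ln s$, so the density is asymptotic to $\frac1{s(\ln s)^2}$; the substitution $u=\ln s$ turns $\int_0\frac{\td s}{s(\ln s)^2}$ into $\int_{-\infty}\frac{\td u}{u^2}$, which converges. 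Hence $\alpha$ is finite at $t=1$, and the same estimate gives $\alpha(t)\sim-\frac1{\ln t}\to0$ as $t\to0^+$, so $\alpha$ is continuous at $t=0$ with $\alpha(0)=0$. By Theorem~4a the sequence $\{(-1)^n b_{n+1}\}_{n=0}^\infty$ is therefore completely monotonic, and the additional continuity of $\alpha$ at $t=0$ lets Theorem~14a conclude that it is minimal.

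The sign bookkeeping and the substitution are routine; I expect the only genuinely delicate point to be the integrability of the density at $s=0$, namely the convergence of $\int_0\frac{\td s}{s(\ln s)^2}$, on which both the boundedness required for complete monotonicity and the continuity at $0$ required for minimality ultimately rest.
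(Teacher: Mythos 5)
Your proposal is correct and takes essentially the same route as the paper: the paper's proof consists precisely of defining $\alpha(t)=\int_0^t\frac{\td s}{s\{[\ln(1/s-1)]^2+\pi^2\}}$ (note $\ln(1/s-1)=\ln\frac{1-s}{s}$, your density exactly) and invoking Widder's Theorems~4a and~14a via the moment identity~\eqref{mu=alpha-moment}. You simply make explicit what the paper leaves implicit --- the substitution $s=1/t$ in~\eqref{Bernoulli-2rd-int} and the endpoint analysis showing $\alpha$ is bounded (indeed $\alpha(1)=b_1=\frac12$) and continuous at $t=0$, which is the point the paper's one-line proof glosses over.
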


\begin{proof}
This follows from setting in the equality~\eqref{mu=alpha-moment}
\begin{equation}
\alpha(t)=\int_0^t\frac{1}{s\{[\ln(1/s-1)]^2+\pi^2\}}\td s
\end{equation}
for $t\in[0,1]$ and $\alpha(1)=b_1=\frac12$. The proof of Theorem~\ref{Ber-minimal-thm} is complete.
\end{proof}

\begin{thm}\label{b(n)-matrix-thm}
Let $m\in\mathbb{N}$ and $a_i$ for $1\le i\le m$ be nonnegative integers. Then
\begin{equation}\label{matrix-1f}
\bigl|(a_i+a_j)!b_{a_i+a_j+1}\bigr|_m\ge0
\end{equation}
and
\begin{equation}\label{matrix-2f}
\bigl|(-1)^{a_i+a_j}{(a_i+a_j)!}b_{a_i+a_j+1}\bigr|_m\ge0,
\end{equation}
where $|a_{ij}|_m$ denotes a determinant of order $m$ with elements $a_{ij}$.
\end{thm}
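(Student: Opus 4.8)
The plan is to read off from the integral representation~\eqref{Bernoulli-2rd-int} that each entry of the two determinants is a genuine moment of a positive measure, and then to recognize the resulting matrices as Gram matrices, whose determinants are automatically non-negative. Writing $w(t)=\frac1{[\ln(t-1)]^2+\pi^2}>0$ on $(1,\infty)$, formula~\eqref{Bernoulli-2rd-int} with $n=a_i+a_j+1$ gives
\begin{equation*}
(-1)^{a_i+a_j}b_{a_i+a_j+1}=\int_1^\infty\frac{w(t)}{t^{\,a_i+a_j+1}}\td t\ge0,
\end{equation*}
so the quantity appearing (up to the factorial) in~\eqref{matrix-2f} is nonnegative. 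The only structural difficulty is the factor $(a_i+a_j)!$: since it depends on $a_i$ and $a_j$ jointly and not as a product $f(a_i)g(a_j)$, the matrix is not immediately of moment type.

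I would resolve this by representing the factorial through the Euler integral $(a_i+a_j)!=\int_0^\infty u^{a_i+a_j}e^{-u}\td u$. Substituting this and invoking Tonelli's theorem (the integrand is nonnegative), the $(i,j)$ entry of~\eqref{matrix-2f} becomes
\begin{equation*}
(-1)^{a_i+a_j}(a_i+a_j)!\,b_{a_i+a_j+1}=\int_0^\infty\!\!\int_1^\infty\Bigl(\frac ut\Bigr)^{a_i+a_j}e^{-u}\frac{w(t)}t\td t\,\td u=\int\!\!\int\phi^{a_i}\phi^{a_j}\td\nu,
\end{equation*}
where $\phi=\frac ut>0$ and $\td\nu=e^{-u}\frac{w(t)}t\td t\,\td u$ is a positive measure on $(0,\infty)\times(1,\infty)$, with each $\phi^{a_i}\in L^2(\td\nu)$ because the moments are finite. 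This exhibits the matrix in~\eqref{matrix-2f} as the Gram matrix of the functions $\phi^{a_1},\dots,\phi^{a_m}$ in $L^2(\td\nu)$; for any real vector $(c_1,\dots,c_m)$ one has $\sum_{i,j}c_ic_j\int\!\!\int\phi^{a_i}\phi^{a_j}\td\nu=\int\!\!\int\bigl(\sum_ic_i\phi^{a_i}\bigr)^2\td\nu\ge0$, so the matrix is positive semidefinite and its determinant, being a principal minor, is non-negative. This proves~\eqref{matrix-2f}.

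For~\eqref{matrix-1f} I would absorb the sign into the power: since $(-1)^{a_i+a_j}=(-1)^{a_i}(-1)^{a_j}$ and the $a_i$ are integers, the same computation gives
\begin{equation*}
(a_i+a_j)!\,b_{a_i+a_j+1}=\int\!\!\int\Bigl(-\frac ut\Bigr)^{a_i}\Bigl(-\frac ut\Bigr)^{a_j}\td\nu,
\end{equation*}
which is the Gram matrix of $(-\phi)^{a_1},\dots,(-\phi)^{a_m}$ and hence again positive semidefinite, so~\eqref{matrix-1f} follows in the same way. Equivalently, the matrix in~\eqref{matrix-1f} is the diagonal congruence $DBD$, where $B$ is the matrix in~\eqref{matrix-2f} and $D=\operatorname{diag}((-1)^{a_1},\dots,(-1)^{a_m})$, so the two determinants in fact coincide since $\det(D)^2=1$. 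The only real obstacle is the joint factor $(a_i+a_j)!$, and the Euler-integral trick is precisely what turns it back into moment/Gram form; everything else (convergence of the integrals and the interchange of the finite sum with the integral) is routine.
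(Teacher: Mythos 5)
Your proof is correct, but it takes a genuinely different route from the paper's. The paper deduces both determinant inequalities from a cited result of Mitrinovi\'c and Pe\v{c}ari\'c~\cite{two-place}: for any completely monotonic function $f$ one has $\bigl|f^{(a_i+a_j)}(x)\bigr|_m\ge0$ and $\bigl|(-1)^{a_i+a_j}f^{(a_i+a_j)}(x)\bigr|_m\ge0$; it applies this to the completely monotonic function $h_n(x)$ defined in~\eqref{h(x)-dfn-eq}, uses $h_n^{(k)}(x)=(-1)^k\frac{(n+k-1)!}{(n-1)!}h_{n+k}(x)$, and then lets $x\to0^+$ via~\eqref{b(n)-limit}. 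You instead work directly with the integral representation~\eqref{Bernoulli-2rd-int}, absorb the joint factor $(a_i+a_j)!$ through the Euler integral $\int_0^\infty u^{a_i+a_j}e^{-u}\td u$, and exhibit the matrix in~\eqref{matrix-2f} as a Gram matrix of $\phi^{a_1},\dotsc,\phi^{a_m}$ in $L^2(\nu)$, with~\eqref{matrix-1f} following by the diagonal congruence $DBD$, $D=\operatorname{diag}\bigl((-1)^{a_1},\dotsc,(-1)^{a_m}\bigr)$. In effect you are inlining, for this specific function, the standard proof of the cited lemma (which itself rests on Widder's representation~\eqref{berstein-1} turning the matrix $\bigl(f^{(a_i+a_j)}(x)\bigr)$ into a Gram matrix); your sign bookkeeping and the identification of the entries all check out, and the routine points you flag are indeed routine, since all moments are dominated by $\int_1^\infty\frac{w(t)}{t}\td t=b_1<\infty$ and Tonelli applies to the nonnegative integrand. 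What your version buys: it is self-contained (no external citation, no auxiliary family $h_n$, no limit $x\to0^+$), it proves the strictly stronger statement that both matrices are positive semidefinite (so \emph{every} principal minor is nonnegative, not just the full determinant), and it shows the two determinants in~\eqref{matrix-1f} and~\eqref{matrix-2f} are equal --- which neatly explains why the paper's remark computes the same value $\frac{857}{86400}$ twice. What the paper's route buys: brevity, and reuse of the same machinery ($h_n$ together with the majorization inequality~\eqref{finkjmaa82=ineq3.2}) to prove Theorem~\ref{lambda-mu-thm}, which does not follow from positive semidefiniteness alone and so would need a separate argument in your framework.
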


\begin{proof}
From the proofs of Theorem~\ref{thm-Bernoulli-Integ-Form}, we observe that
\begin{equation}\label{b(n)-limit}
b_n=(-1)^{n+1}\lim_{x\to0^+}h_n(x),
\end{equation}
where the function
\begin{equation}\label{h(x)-dfn-eq}
h_n(x)=\int_1^\infty\frac{1}{\{[\ln(t-1)]^2+\pi^2\}(t+x)^{n}}\td t
\end{equation}
is completely monotonic on $[0,\infty)$.
\par
In~\cite{two-place}, or see~\cite[p.~367]{mpf-1993}, it was obtained that if $f$ is a completely monotonic function, then
\begin{equation}\label{matrix-eq-1}
\bigl|f^{(a_i+a_j)}(x)\bigr|_m\ge0
\end{equation}
and
\begin{equation}\label{matrix-eq-2}
\bigl|(-1)^{a_i+a_j}f^{(a_i+a_j)}(x)\bigr|_m\ge0,
\end{equation}
where $|a_{ij}|_m$ denotes a determinant of order $m$ with elements $a_{ij}$ and $a_i$ for $1\le i\le m$ are nonnegative integers. Applying $f$ in~\eqref{matrix-eq-1} and~\eqref{matrix-eq-2} to the function $h_n(x)$ yields
\begin{equation}\label{matrix-eq-1h}
\bigl|h_n^{(a_i+a_j)}(x)\bigr|_m\ge0
\end{equation}
and
\begin{equation}\label{matrix-eq-2h}
\bigl|(-1)^{a_i+a_j}h_n^{(a_i+a_j)}(x)\bigr|_m\ge0,
\end{equation}
that is,
\begin{equation}\label{matrix-eq-1b}
\biggl|(-1)^{a_i+a_j}\frac{(n+a_i+a_j-1)!}{(n-1)!}h_{n+a_i+a_j}(x)\biggr|_m\ge0
\end{equation}
and
\begin{equation}\label{matrix-eq-2b}
\biggl|\frac{(n+a_i+a_j-1)!}{(n-1)!}h_{n+a_i+a_j}(x)\biggr|_m\ge0.
\end{equation}
Letting $x\to0^+$ in~\eqref{matrix-eq-1b} and~\eqref{matrix-eq-2b} and making use of~\eqref{b(n)-limit} produce
\begin{equation}\label{matrix-1b}
\biggl|(-1)^{a_i+a_j}\frac{(n+a_i+a_j-1)!}{(n-1)!}(-1)^{n+a_i+a_j+1}b_{n+a_i+a_j}\biggr|_m\ge0
\end{equation}
and
\begin{equation}\label{matrix-2b}
\biggl|\frac{(n+a_i+a_j-1)!}{(n-1)!}(-1)^{n+a_i+a_j+1}b_{n+a_i+a_j}\biggr|_m\ge0.
\end{equation}
Further simplifying~\eqref{matrix-1b} and~\eqref{matrix-2b} leads to
\begin{equation*}
\bigl|(-1)^{n+1}{(n+a_i+a_j-1)!}b_{n+a_i+a_j}\bigr|_m\ge0
\end{equation*}
and
\begin{equation*}
\bigl|(-1)^{n+a_i+a_j+1}{(n+a_i+a_j-1)!}b_{n+a_i+a_j}\bigr|_m\ge0,
\end{equation*}
which are equivalent to~\eqref{matrix-1f} and~\eqref{matrix-2f}. Theorem~\ref{b(n)-matrix-thm} is thus proved.
\end{proof}

\begin{rem}
Taking $m=3$ and $a_i=i$ for $i=0,1,2$ in Theorem~\ref{b(n)-matrix-thm} and using values of $b_i$ for $1\le i\le5$ in~\eqref{Berno-5-values} show us that
\begin{equation*}
\begin{vmatrix}
  0!b_1 & 1!b_2 & 2!b_3 \\
  1!b_2 & 2!b_3 & 3!b_4 \\
  2!b_3 & 3!b_4 & 4!b_5
\end{vmatrix}
=
\begin{vmatrix}
  \frac12 & -\frac1{12} & \frac1{12} \\
  -\frac1{12} & \frac1{12} & -\frac{19}{120} \\
  \frac1{12} & -\frac{19}{120} & \frac35
\end{vmatrix}
=\frac{857}{86400}
\end{equation*}
and
\begin{equation*}
\begin{vmatrix}
  0!b_1 & -1!b_2 & 2!b_3 \\
  -1!b_2 & 2!b_3 & -3!b_4 \\
  2!b_3 & -3!b_4 & 4!b_5
\end{vmatrix}
=
\begin{vmatrix}
  \frac12 & \frac1{12} & \frac1{12} \\
  \frac1{12} & \frac1{12} & \frac{19}{120} \\
  \frac1{12} & \frac{19}{120} & \frac35
\end{vmatrix}
=\frac{857}{86400}.
\end{equation*}
\end{rem}

Let $\lambda=(\lambda_1,\lambda_2,\dotsc,\lambda_{n})\in\mathbb{R}^{n}$ and $\mu=(\mu_1,\mu_2,\dotsc, \mu_{n})\in\mathbb{R}^{n}$. A sequence $\lambda$ is said to be majorized by $\mu$ \textup{(}in symbols $\lambda\preceq \mu$\textup{)} if
\begin{equation*}
\sum_{i=1}^k \lambda_{[i]}\le\sum_{i=1}^k \mu_{[i]}
\end{equation*}
for $k=1,2,\dotsc,n-1$ and
$$
\sum_{i=1}^n \lambda_i=\sum_{i=1}^n\mu_i,
$$
where $\lambda_{[1]}\ge \lambda_{[2]}\ge \dotsm \ge \lambda_{[n]}$ and $\mu_{[1]}\ge \mu_{[2]}\ge\dotsm \ge \mu_{[n]}$ are rearrangements of $\lambda$ and $\mu$ in a descending order.
\par
A sequence $\lambda$ is said to strictly majorized by $\mu$ $($in symbols $\lambda \prec \mu)$ if $\lambda$ is not a permutation of $\mu$.
\par
In~\cite[p.~106, Theorem~A]{haerc1}, a correction of~\cite[Theorem~1]{finkjmaa82} which was collected in~\cite[p.~367, Theorem~2]{mpf-1993}, it was obtained that if $f$ is a completely monotonic function on $(0,\infty)$ and $\lambda\preceq\mu$, then
\begin{equation}\label{finkjmaa82=ineq3.2}
\Biggl|\prod_{i=1}^nf^{(\lambda_i)}(x)\Biggr|\le \Biggl|\prod_{i=1}^nf^{(\mu_i)}(x)\Biggr|.
\end{equation}

\begin{thm}\label{lambda-mu-thm}
Let $m\in\mathbb{N}$ and let $\lambda$ and $\mu$ be two $m$-tuples of nonnegative numbers such that $\lambda\preceq\mu$. Then
\begin{equation}\label{lambda-mu-eq}
\Biggl|\prod_{i=1}^m {\lambda_i!}b_{\lambda_i+1}\Biggr|\le \Biggl|\prod_{i=1}^m {\mu_i!}b_{\mu_i+1}\Biggr|.
\end{equation}
\end{thm}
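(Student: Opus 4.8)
The plan is to realize the quantities $\lambda_i!\,b_{\lambda_i+1}$ as boundary values at $0^+$ of the successive derivatives of a single completely monotonic function, and then to invoke the majorization inequality~\eqref{finkjmaa82=ineq3.2}. The natural candidate is the function $h_1(x)$ from~\eqref{h(x)-dfn-eq}, which was already shown in the proof of Theorem~\ref{b(n)-matrix-thm} to be completely monotonic on $[0,\infty)$, hence a fortiori on $(0,\infty)$.

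First I would differentiate $h_1$ under the integral sign. Taking $n=1$ in~\eqref{h(x)-dfn-eq} gives, for every $k\ge0$,
\[
h_1^{(k)}(x)=(-1)^k k!\int_1^\infty\frac{1}{\{[\ln(t-1)]^2+\pi^2\}(t+x)^{k+1}}\td t=(-1)^k k!\,h_{k+1}(x).
\]
Letting $x\to0^+$ and applying~\eqref{b(n)-limit} in the form $\lim_{x\to0^+}h_{k+1}(x)=(-1)^k b_{k+1}$ then yields the key identity
\[
h_1^{(k)}(0^+)=k!\,b_{k+1}.
\]
Consequently $\lambda_i!\,b_{\lambda_i+1}=h_1^{(\lambda_i)}(0^+)$ and $\mu_i!\,b_{\mu_i+1}=h_1^{(\mu_i)}(0^+)$, where the entries of $\lambda$ and $\mu$ are understood to be nonnegative integers so that these derivatives are defined.

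Next I would apply~\eqref{finkjmaa82=ineq3.2} to the completely monotonic function $f=h_1$. Since $\lambda\preceq\mu$, this gives
\[
\Biggl|\prod_{i=1}^m h_1^{(\lambda_i)}(x)\Biggr|\le\Biggl|\prod_{i=1}^m h_1^{(\mu_i)}(x)\Biggr|
\]
for every $x\in(0,\infty)$. Letting $x\to0^+$ and substituting the key identity into both sides then produces exactly~\eqref{lambda-mu-eq}.

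The step I expect to be the main obstacle is justifying the passage to the limit $x\to0^+$ inside the product and the absolute value. This is legitimate because each factor $h_1^{(\lambda_i)}(x)=(-1)^{\lambda_i}\lambda_i!\,h_{\lambda_i+1}(x)$ extends continuously to $x=0$ with the finite value $\lambda_i!\,b_{\lambda_i+1}$; hence both sides of the displayed inequality are continuous on $[0,\infty)$ and the inequality is preserved in the limit. The continuity at the endpoint in turn rests on the convergence of the integral defining $h_{\lambda_i+1}(0)$, which is precisely the point already exploited when establishing~\eqref{b(n)-limit} and Theorem~\ref{thm-Bernoulli-Integ-Form}.
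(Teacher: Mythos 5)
Your proposal is correct and follows essentially the same route as the paper: it applies the majorization inequality~\eqref{finkjmaa82=ineq3.2} to the completely monotonic function $h_n(x)$ from~\eqref{h(x)-dfn-eq} (you fix $n=1$, which is exactly the case the paper's general-$n$ argument specializes to) and then lets $x\to0^+$ via~\eqref{b(n)-limit}. Your explicit justification of the endpoint limit is a welcome extra detail the paper leaves implicit, but it is the same proof.
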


\begin{proof}
Employing the inequality~\eqref{finkjmaa82=ineq3.2} applied to $h_n(x)$ defined by~\eqref{h(x)-dfn-eq} creates
\begin{equation*}
\Biggl|\prod_{i=1}^m(-1)^{\lambda_i} \frac{(n+\lambda_i-1)!}{(n-1)!}h_{n+\lambda_i}(x)\Biggr|
\le \Biggl|\prod_{i=1}^m (-1)^{\mu_i}\frac{(n+\mu_i-1)!}{(n-1)!}h_{n+\mu_i}(x)\Biggr|
\end{equation*}
which can be simplified as
\begin{equation*}
\Biggl|\prod_{i=1}^m {(n+\lambda_i-1)!}h_{n+\lambda_i}(x)\Biggr|
\le \Biggl|\prod_{i=1}^m {(n+\mu_i-1)!}h_{n+\mu_i}(x)\Biggr|.
\end{equation*}
Further taking $x\to0^+$ and utilizing~\eqref{b(n)-limit} turn out
\begin{equation*}
\Biggl|\prod_{i=1}^m {(n+\lambda_i-1)!}(-1)^{n+\lambda_i+1}b_{n+\lambda_i}\Biggr|\le \Biggl|\prod_{i=1}^m {(n+\mu_i-1)!}(-1)^{n+\mu_i+1}b_{n+\mu_i}\Biggr|
\end{equation*}
which is equivalent to~\eqref{lambda-mu-eq}. The proof of Theorem~\ref{lambda-mu-thm} is complete.
\end{proof}

\begin{thm}\label{b(n)-log-Convex-thm}
The sequence $\{i!b_{i+1}\}_0^\infty$ is logarithmically convex.
\end{thm}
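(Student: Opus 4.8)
The plan is to recognise that the logarithmic convexity of the sequence $c_i=i!b_{i+1}$ is precisely the $m=2$ instance of the majorization inequality~\eqref{lambda-mu-eq} recorded in Theorem~\ref{lambda-mu-thm}. First I would fix the meaning of ``logarithmically convex'' in the present situation. The integral representation~\eqref{Bernoulli-2rd-int} gives
\begin{equation*}
b_{i+1}=(-1)^i\int_1^\infty\frac{1}{\{[\ln(t-1)]^2+\pi^2\}t^{i+1}}\td t,
\end{equation*}
so $c_i=i!b_{i+1}$ carries the sign $(-1)^i$, whence $c_{i-1}$ and $c_{i+1}$ share the common sign $(-1)^{i-1}$. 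Therefore $c_{i-1}c_{i+1}=|c_{i-1}||c_{i+1}|\ge0$ while $c_i^2=|c_i|^2$, so the log-convexity relation $c_i^2\le c_{i-1}c_{i+1}$ is well posed and equivalent to $|c_i|^2\le|c_{i-1}||c_{i+1}|$.

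Next I would invoke Theorem~\ref{lambda-mu-thm} with $m=2$ applied to the two pairs of nonnegative integers $\lambda=(i,i)$ and $\mu=(i-1,i+1)$, valid for $i\ge1$. The majorization $\lambda\preceq\mu$ is immediate: arranged in descending order the single nontrivial partial-sum inequality reads $\mu_{[1]}=i+1\ge i=\lambda_{[1]}$, while the total sums coincide, $2i=(i-1)+(i+1)$. Hence~\eqref{lambda-mu-eq} yields
\begin{equation*}
\bigl|i!b_{i+1}\bigr|\bigl|i!b_{i+1}\bigr|\le\bigl|(i-1)!b_i\bigr|\bigl|(i+1)!b_{i+2}\bigr|,
\end{equation*}
that is, $|c_i|^2\le|c_{i-1}||c_{i+1}|$, which is exactly the claimed logarithmic convexity.

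A self-contained alternative, bypassing the majorization machinery, is to apply the Cauchy--Schwarz inequality directly to the positive weight $w(t)=\frac1{[\ln(t-1)]^2+\pi^2}$ occurring in~\eqref{Bernoulli-2rd-int}. Writing $|c_i|=i!\int_1^\infty w(t)t^{-(i+1)}\td t$ and factoring $t^{-(i+1)}=t^{-i/2}t^{-(i+2)/2}$ gives $\bigl(\int_1^\infty w(t)t^{-(i+1)}\td t\bigr)^2\le\bigl(\int_1^\infty w(t)t^{-i}\td t\bigr)\bigl(\int_1^\infty w(t)t^{-(i+2)}\td t\bigr)$; multiplying through by $(i!)^2$ and using $(i-1)!(i+1)!=\frac{i+1}{i}(i!)^2\ge(i!)^2$ again delivers $|c_i|^2\le|c_{i-1}||c_{i+1}|$.

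The one genuinely delicate point is conceptual rather than computational: pinning down the right notion of logarithmic convexity for a sign-alternating sequence, and verifying that the sign pattern forced by~\eqref{Bernoulli-2rd-int} renders both sides of $c_i^2\le c_{i-1}c_{i+1}$ nonnegative so that the statement coincides with log-convexity of $\{|c_i|\}$. Once this is settled, either route collapses to the one-line majorization check above, so I expect no further obstruction.
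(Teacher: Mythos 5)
Your main argument is correct and is essentially the paper's own proof: your majorization $(i,i)\preceq(i-1,i+1)$ for $i\ge1$ is just the paper's $(i+1,i+1)\prec(i,i+2)$ for $i\ge0$ with the index shifted, both feeding into Theorem~\ref{lambda-mu-thm} to give $(i!b_{i+1})[(i+2)!b_{i+3}]\ge[(i+1)!b_{i+2}]^2$, and your preliminary sign analysis via~\eqref{Bernoulli-2rd-int} usefully makes explicit what ``logarithmically convex'' means for this alternating sequence. Your Cauchy--Schwarz alternative (with the weight $w(t)$ and the observation $(i-1)!(i+1)!\ge(i!)^2$) is a correct, self-contained bonus that bypasses the majorization machinery entirely, though the paper does not take that route.
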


\begin{proof}
It is clear that $(i,i+2)\succ(i+1,i+1)$ for $i\ge0$. Therefore, by virtue of~\eqref{lambda-mu-eq}, we have
\begin{equation}\label{b(n)-log-convex-eq}
(i!b_{i+1})[(i+2)!b_{i+3}]\ge [(i+1)!b_{i+2}]^2.
\end{equation}
This implies the required logarithmic convexity.
\par
This conclusion can also be deduced from Theorem~\ref{b(n)-matrix-thm}.
The proof of Theorem~\ref{b(n)-log-Convex-thm} is thus complete.
\end{proof}

\begin{rem}
Letting $i=2$ in~\eqref{b(n)-log-convex-eq} and using three corresponding values of $b_i$ for $3\le i\le5$ in~\eqref{Berno-5-values} give
\begin{equation*}
(2!b_{3})(4!b_5)=\frac3{80}=0.0375\ge(3!b_{4})^2=\frac{361}{14400}=0.0250\dotsc.
\end{equation*}
\end{rem}

\section{A Bernstein function}

Finally, we prove that the generating function $\frac{x}{\ln(1+x)}$ is a Bernstein function.

\begin{thm}\label{Bernoulli-Gen-Funct-Bernstein-thm}
The generating function $\frac{x}{\ln(1+x)}$ of Bernoulli numbers of the second kind $b_k$ is a Bernstein function on $(0,\infty)$.
\end{thm}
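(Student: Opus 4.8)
The plan is to check the three requirements of Definition~\ref{Bernstein-funct-dfn} for $f(x)=\frac{x}{\ln(1+x)}$ directly, reusing the integral computation already carried out in the proof of Theorem~\ref{thm-Bernoulli-Integ-Form}. First I would dispose of the two easy requirements. On $(0,\infty)$ one has $x>0$ and $\ln(1+x)>0$, so $f(x)>0$, and $f$ is infinitely differentiable there because $\ln(1+x)$ is smooth and nonvanishing; equivalently, the representation~\eqref{frac(x)(ln(1+x))-eq1} shows $f(x)\ge1$ since its integrand is nonnegative for $x>0$. Thus $f$ maps $(0,\infty)$ into $[0,\infty)$ and possesses derivatives of all orders, and only the complete monotonicity of $f'$ remains.

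The core of the argument is that $f'$ is completely monotonic on $(0,\infty)$, and here the formula~\eqref{deriv-int-expression} supplies everything. Setting $g=f'$, the requirement is $(-1)^{k-1}g^{(k-1)}(x)\ge0$, that is $(-1)^{k-1}f^{(k)}(x)\ge0$, for every $k\in\mathbb{N}$. Inserting~\eqref{deriv-int-expression} and using $(-1)^{k-1}(-1)^{k+1}=(-1)^{2k}=1$ gives
\begin{equation*}
(-1)^{k-1}f^{(k)}(x)=k!\int_1^\infty\frac{t}{[\ln(t-1)]^2+\pi^2}\frac{1}{(x+t)^{k+1}}\td t\ge0,
\end{equation*}
because the integrand is positive on $(1,\infty)$ for each fixed $x>0$. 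Hence $f'$ is completely monotonic, and by Definition~\ref{Bernstein-funct-dfn} the generating function $f(x)=\frac{x}{\ln(1+x)}$ is a Bernstein function on $(0,\infty)$.

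I expect no real obstacle beyond tracking the parity of the sign factor in the display above; the whole conclusion is driven by~\eqref{deriv-int-expression}, which already records that the underlying measure $\frac{t}{[\ln(t-1)]^2+\pi^2}\td t$ is nonnegative. It is worth flagging a second, more structural route that avoids the explicit higher derivatives: the Lemma of Section~3 asserts that $\frac1{\ln(1+x)}$ is a Stieltjes function, of the form $\frac1x+\int_1^\infty\frac1{[\ln(t-1)]^2+\pi^2}\frac{\td t}{x+t}$; multiplying through by $x$ replaces each Stieltjes kernel by $\frac{x}{x+t}$, whose derivative $\frac{t}{(x+t)^2}$ is completely monotonic in $x$, so each $\frac{x}{x+t}$ is a Bernstein function and the nonnegative superposition $\frac{x}{\ln(1+x)}$ is a Bernstein function as well. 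I would present the first proof as primary, since it quotes~\eqref{deriv-int-expression} verbatim and needs no auxiliary closure properties of the function classes.
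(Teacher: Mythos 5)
Your proposal is correct and coincides with the paper's first proof: both rest on the representation~\eqref{frac(x)(ln(1+x))-eq1} for positivity and on~\eqref{deriv-int-expression} for the complete monotonicity of the derivative, with your sign computation $(-1)^{k-1}(-1)^{k+1}=1$ merely making explicit what the paper asserts. Your alternative Stieltjes-kernel route is sound but differs from the paper's own second proof, which instead writes $\frac{x}{\ln(1+x)}=\int_0^1(1+x)^t\td t$ as a superposition of the Bernstein functions $(1+x)^t$, $t\in(0,1)$.
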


\begin{proof}[First proof]
The integral representation~\eqref{frac(x)(ln(1+x))-eq1} shows us that the function $\frac{x}{\ln(1+x)}$ is positive and increasing on $(0,\infty)$. The integral representation~\eqref{deriv-int-expression} reveals that the first derivative of $\frac{x}{\ln(1+x)}$ is completely monotonic on $(0,\infty)$. So, by Definition~\ref{Bernstein-funct-dfn}, the function $\frac{x}{\ln(1+x)}$ is a Bernstein function on $(0,\infty)$. The proof of Theorem~\ref{Bernoulli-Gen-Funct-Bernstein-thm} is complete.
\end{proof}

\begin{proof}[Second proof]
It is not difficult to see that
\begin{equation}
\frac{x}{\ln(1+x)}=\int_0^1(1+x)^t\td t
\end{equation}
and the function $(1+x)^t$ for $t\in(0,1)$ is a Bernstein function. Theorem~\ref{Bernoulli-Gen-Funct-Bernstein-thm} is thus proved.
\end{proof}

\end{document}